\documentclass[leqno,11pt]{amsart}
\usepackage[colorlinks,pagebackref,hypertexnames=false]{hyperref}

\usepackage{amsmath,amsthm,amssymb,mathrsfs} 
\usepackage[alphabetic,backrefs]{amsrefs}
\usepackage{ae,aecompl}
\usepackage[margin=1in]{geometry} 
\usepackage[retainorgcmds]{IEEEtrantools}
\usepackage{soul}

\usepackage[english]{babel}

\usepackage{bookmark}

\usepackage{xypic}
\usepackage{amsmath,amsfonts,amssymb,graphics,graphicx,latexsym,amscd,subfigure,hyperref}
\usepackage[usenames]{xcolor}
\usepackage[retainorgcmds]{IEEEtrantools}
\usepackage{graphicx}
\usepackage{amssymb}
\usepackage{xypic}
\usepackage{hyperref}
\usepackage{psfrag}
\usepackage{amsmath,amssymb}
\usepackage{dsfont}

\newtheorem{thm}{Theorem}[section]

\newtheorem{cor}[thm]{Corollary}
\newtheorem{prop}[thm]{Proposition}

\newtheorem{lemma}[thm]{Lemma}
\newtheorem{defn}[thm]{Definition}
\newtheorem{rem}[thm]{Remark}

\newcommand{\bbR}{\mathbb{R}}

\newcommand{\bbT}{\mathbb{T}}
\newcommand{\bbC}{\mathbb{C}}

\newcommand{\bbZ}{\mathbb{Z}}
\newcommand{\bbH}{\mathbb{H}}

\newcommand{\grad}{\triangledown}

\newcommand{\mP}{{\mathcal P}}
\newcommand{\mA}{\mathcal{A}}
\newcommand{\mB}{{\mathcal B}}

\newcommand{\bz}{{\bf z}}
\newcommand{\bx}{{\bf x}}
\newcommand{\fut}{{\eta}}

\newcommand{\CP}{\mathbb C \mathbb P}

\DeclareMathOperator{\Lap}{\triangle}

\DeclareMathOperator{\spec}{Spec}

\DeclareMathOperator{\Hess}{Hess}

\DeclareMathOperator{\scal}{Scal}

\DeclareMathOperator{\interior}{Int}
\DeclareMathOperator{\tr}{Tr}

\newcommand{\cp}{{\mathbb{CP}}}

\begin{document}
\title[Hermitian, Ricci-flat, toric metrics]{Hermitian, Ricci-flat toric metrics on non-compact surfaces \`a la Biquard-Gauduchon}
\author{Gon\c calo Oliveira, Rosa Sena-Dias}
\begin{thanks}
{Partially supported by the Funda\c{c}\~{a}o para a Ci\^{e}ncia e a Tecnologia (FCT/Portugal) through project EXPL/MAT-PUR/1408/2021EKsta.}
\end{thanks}

\address{Centro de An\'{a}lise Matem\'{a}tica, Geometria e Sistemas Din\^{a}micos, Departamento de Matem\'atica, Instituto Superior T\'ecnico}
\email{galato97@gmail.com, rsenadias@math.ist.utl.pt}

\begin{abstract} 
In \cite{bg}, Biquard-Gauduchon show that conformally K\"ahler, Ricci-flat, ALF toric metrics on the complement of toric divisors are: the Taub-NUT metric with reversed orientation, in the Kerr-Taub-bolt family or in the Chen-Teo family. In the same paper, Biquard-Gauduchon also give a unified construction for the above families relying on an axi-symmetric harmonic function on $\bbR^3$. In this work, we reverse this construction and use methods from \cite{s} to show that {\it all} conformally K\"ahler, Ricci-flat, toric metrics on the complement of toric divisors, under some mild assumptions on the associated moment polytope, are among the families above. In particular all such metrics are ALF.
\end{abstract}

\maketitle 
\tableofcontents
\section{Introduction}
Einstein metrics play an important role in geometry. In the K\"ahler setting a special subclass of such metrics has been much studied, namely K\"ahler-Einstein metrics. The existence problem for K\"ahler-Einstein metrics in the first Chern class of a K\"ahler manifold has been the subject of intense investigation. Chen-Donaldson-Sun (see \cite{cds} and subsequent articles by the same authors) have proved that such metrics exist if and only if the underlying K\"ahler class is  K-stable. In particular, there are K\"ahler $4$-manifolds such as $\CP^2\sharp\overline{\CP}^2$ or $\CP^2\sharp2\overline{\CP}^2$ which do not admit any K\"ahler-Einstein metric. It is then natural to look for Einstein metrics that are Hermitian i.e. for which the metric is preserved by the complex structure.  Page constructed one such metric on $\CP^2\sharp\overline{\CP}^2$. 

It follows from the Riemannian Goldberg-Sachs Theorem and the work of Derdzinski that, in dimension four, all Hermitian-Einstein metrics are actually conformally K\"ahler \cite{l0,AG}. Derdzinski further showed that, under certain assumptions, extremal, Bach-flat, K\"ahler metrics on $4$-manifolds admit an Einstein metric in their conformal classes. Using Derdzinski's results, Chen-LeBrun-Weber (see \cite{clw}) proved the existence of a Hermitian-Einstein metric on $\CP^2\sharp2\overline{\CP}^2.$ It is natural to wonder wether Derdzinski's results could yield new Einstein metrics in the non-compact case and to what extent such metrics are unique when they exist. 

In \cite{bg}, Biquard and Gauduchon applied Derdzinski's methods to construct Hermitian, Ricci-flat, toric metrics on non-compact spaces having a Poincar\'e type behaviour at infinity. In fact, all of the metrics they construct this way had been previously written down. The metrics are: the Taub-NUT metric with reversed orientation on the complement of a divisor in $\cp^2$; in the Kerr family on the complement of a divisor in $\CP^2\sharp\overline{\CP}^2$; in the Taub-bolt family in the complement of a different divisor in $\CP^2\sharp\overline{\CP}^2$, or in the Chen-Teo family in the complement of a divisor in $\CP^2\sharp2\overline{\CP}^2.$ Although the smooth metrics appearing in \cite{bg} are not new, the authors give a new, unified construction for all the metrics in the above families. They also prove a classification result for Hermitian, Ricci-flat, toric metrics with ALF behaviour at infinity. 
\begin{thm}[Biquard-Gauduchon]\label{thmBG}
Let $(X,g)$ be a Hermitian, non-K\"ahler, Ricci-flat, ALF toric metric. Then $g$ is either one of the metrics constructed by Chen-Teo, one of the Kerr-Taub-bolt metrics or the Taub-NUT metric with opposite orientation.
\end{thm}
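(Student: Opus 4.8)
The plan is to run Derdzinski's machinery in reverse. In real dimension four a Hermitian, non-K\"ahler, Einstein metric is conformally K\"ahler, so one begins by writing $g = s^{-2}h$, where $h$ is K\"ahler for the given complex structure, normalized so that $s = s_h$ is its scalar curvature; note that $s$ is nowhere zero since $g$ is a genuine metric. Derdzinski's identities then turn the Ricci-flatness of $g$ into conditions on $h$: it is an extremal K\"ahler metric (its scalar curvature is a Killing potential) and it is Bach-flat. Since the $T^2$ acts by isometries of $g$ preserving $J$, it also preserves $h$ and the canonically-defined $s$, and the action is Hamiltonian for $\omega_h$; hence $h$ is a toric extremal K\"ahler metric on a non-compact moment polytope $P$ with $s$ an affine function of the moment map. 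In Abreu's formalism this is a fourth-order equation for the symplectic potential $u$ on $P$ with affine prescribed scalar curvature, together with a residual second-order equation produced by the conformal rescaling.

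The second step is to recognize these equations as the Biquard-Gauduchon ansatz read backwards. Since $g$ itself is Ricci-flat with a $T^2$ of isometries, pass to Weyl-Papapetrou-type coordinates on the orbit surface $M/T^2$: let $\rho$ be the square root of the determinant of the Gram matrix of the generating Killing fields --- a nonnegative function that is harmonic off the axis --- and let $z$ be its harmonic conjugate. The Ricci-flat $T^2$-equations reduce to a harmonic map equation, and the Hermitian/extremal structure, which fixes the extremal vector field and hence one of the two potentials, collapses this to a single axisymmetric harmonic function $V$ on an open subset of $\mathbb{R}^3$; from $V$ one reconstructs $h$, the conformal factor $s$, and therefore $g$ by the explicit formulae of \cite{bg}. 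Every metric in the theorem thus arises from such a $V$, and classifying the metrics becomes classifying the admissible $V$.

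The third step is the global and combinatorial analysis. One shows that the a priori only locally defined $V$ extends to a harmonic function on all of $\mathbb{R}^3$ with mild singularities along the $z$-axis: each edge and each vertex (a ``turning point'') of $P$ contributes a finite or semi-infinite ``rod'', and smoothness and completeness of $g$ across the axis force the usual $\mathrm{SL}(2,\mathbb{Z})$-compatibility of the primitive rod vectors of adjacent rods --- exactly the Delzant condition at the corners of $P$. The ALF hypothesis rigidly constrains the asymptotic rod structure, bounding the number of interior turning points; together with the mild assumption on $P$ this leaves a short list of combinatorial types, with $V$ a superposition of finitely many point sources on the axis whose asymptotics are fixed by the ALF normalization. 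A direct identification of each type with the known explicit metrics --- the Taub-NUT metric with reversed orientation, the Kerr and Taub-bolt metrics, and the Chen-Teo family --- then finishes the proof.

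The main obstacle is the global step: upgrading the purely local Weyl/Abreu picture to a harmonic function on all of $\mathbb{R}^3$ with only finitely many isolated rod singularities. This means ruling out pathological behaviour of the moment polytope and of $u$ (incomplete ends, accumulating turning points, edges of infinite slope) and showing that $\rho$ is a proper coordinate onto a half-plane, so that the rod structure is even well defined --- here the hypotheses on $P$, on completeness, and ALF really enter, and the polytope-based methods of \cite{s} are needed. Once the rod data is known to be finite, the remaining case analysis and the matching with the four families --- the Chen-Teo case being the only genuinely intricate one --- are essentially mechanical.
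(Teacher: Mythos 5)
First, a point of context: the paper does not prove Theorem (\ref{thmBG}) at all --- it is quoted from Biquard--Gauduchon \cite{bg} and used as an input, the paper's own contribution (Theorems (\ref{thm_main}) and (\ref{thm_main2})) being the removal of the ALF hypothesis at the price of assuming $X=M\setminus D$ with $M$ compact toric plus a condition on the extremal vector field. So the fair comparison is with \cite{bg} and with the machinery of Sections 3--6. Your first two steps track that machinery closely and correctly: the passage $g=s^{-2}h$ with $h$ extremal, Bach-flat and toric is exactly Section 3 (Derdzinski plus Riemannian Goldberg--Sachs), and your Weyl--Papapetrou coordinate $\rho$, the square root of the Gram determinant of the Killing fields of $g$, is literally the paper's $\rho=(\det\Hess u)^{-1/2}/\scal^2(g_K)$, whose harmonicity and associated axisymmetric potential $U$ are Lemma (\ref{lemma_harmonic}) and Proposition (\ref{U}). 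Where you genuinely diverge is the endgame: the paper does not classify rod structures. It writes down a candidate $U^{ref}$ from the piecewise-affine boundary trace $f$, proves $U=U^{ref}$ by a Liouville argument (the quotient $(U^{ref}_\rho-U_\rho)/\rho$ is a bounded axisymmetric harmonic function on $\bbR^5$, hence constant), pins down the nut positions $z_i$ by computing volumes of the invariant spheres, and obtains the sign conditions $a_i>0$, $A>0$, $B=0$ from convexity of $f$, positivity of $V$ and Corollary (\ref{cor:B=0}) rather than from asymptotics. Your route, if completed, buys the theorem as actually stated (pure ALF, no polytope hypothesis); the paper's route buys uniqueness with no asymptotic assumption at all.

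The one genuine soft spot is your third step. You invoke ``the mild assumption on $P$'' to bound the combinatorics of the rod structure, but Theorem (\ref{thmBG}) carries no such hypothesis --- its only global input is ALF. For the theorem as stated, the finiteness of the rod data, the properness of $(z,\rho)$ onto the half-plane, and the exclusion of accumulating turning points must all be extracted from the ALF decay alone; this is precisely where \cite{bg} does its hard analytic work, and it cannot be outsourced to the polytope hypotheses of \cite{s} or of Theorem (\ref{thm_main2}) without changing the statement being proved. As written, your step three establishes a hybrid of Theorems (\ref{thmBG}) and (\ref{thm_main2}) rather than either one; you correctly identify this globalization as the main obstacle, but the tools you propose for it belong to the wrong theorem.
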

\vspace{-0.2in}
\begin{figure}[h]
	\centering
	\includegraphics[scale=0.45]{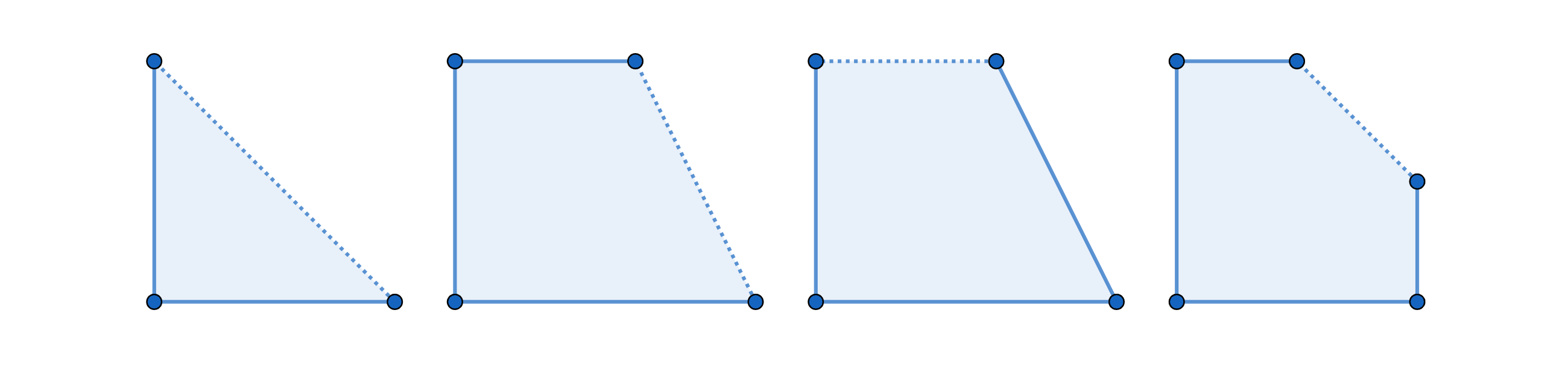}
	\caption{From left to right, polytopes corresponding to the reversed Taub-Nut, Kerr, Taub-bolt and Chen--Teo families.}
\end{figure}

The Taub-NUT metric is a K\"ahler, Ricci-flat, toric metric on $\bbR^4$ which is also the complement of a torus invariant divisor in $\bbC^2$. There is a different complex structure in $\bbR^4$ with respect to which the Taub-NUT metric is no longer K\"ahler but rather Hermitian. It is asymptotic to $\bbR^2\times S^1$ in a precise manner at infinity, hence ALF. The Kerr-Taub-bolt metrics are other examples of K\"ahler Ricci-flat metric with a $\bbT^2$-symmetry. They are defined on the complement of a divisor in $\CP^2\sharp\overline{\CP}^2$ and include the famous Schwarzschild metric as a degeneration. 
Many compact examples of Ricci-flat K\"ahler metrics arise via hyperK\"ahler geometry. Ricci-flat, complete, non-compact metrics, with a certain assumption on the decay of the curvature at infinity, also known as instantons, have been well studied too, particularly K\"ahler gravitational instantons. We currently have classification result for such K\"ahler gravitational instantons thanks to Chen-Chen (see \cite{cc}). In particular their behaviour at infinity is known to be restricted. 

In \cite{s}, the second author obtained a classification result for non-compact, K\"ahler, scalar-flat, toric metrics. As it turns out, K\"ahler, scalar-flat, toric metrics all arise from a construction in \cite{as} which is based on an ansatz of Joyce that was translated into the language of action-angle coordinates on toric K\"ahler manifolds by Donaldson (see \cite{donjoyce}). Such metrics are known as Donaldson generalised Taub-NUT metrics.
\begin{thm}[\cite{s}]\label{thm_us}
Let $(X,J,g)$ be a strictly unbounded toric K\"ahler surface. If $g$ is scalar-flat, then $g$ is equivariantly isometric to a Donaldson generalised Taub-NUT metric.
\end{thm}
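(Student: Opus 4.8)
The plan is to pass to Guillemin--Abreu action-angle coordinates, in which a toric K\"ahler surface $(X,J,g)$ is encoded by a labelled convex polyhedral region $P\subset\bbR^2$ (its moment image) together with a \emph{symplectic potential} $u\colon\interior P\to\bbR$ satisfying Guillemin's boundary conditions along the facets of $P$; the metric is $g=\sum_{i,j}u_{ij}\,dx^idx^j+\sum_{i,j}u^{ij}\,d\theta^id\theta^j$, where $(u^{ij})$ is the inverse of the Hessian $(u_{ij})$. Under this dictionary the scalar-flat hypothesis becomes Abreu's equation $\sum_{i,j}\partial_i\partial_j u^{ij}=0$ on $\interior P$, and ``strictly unbounded'' is a condition on $P$ alone (its recession cone is two-dimensional). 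On the other side, by the construction of \cite{as} (Joyce's ansatz in action-angle coordinates, following \cite{donjoyce}) the Donaldson generalised Taub-NUT metrics correspond, via the same dictionary, to an explicit family of symplectic potentials built from an axially symmetric harmonic function on $\bbR^3$ with finitely many point sources along the axis. Since a toric K\"ahler metric determines its symplectic potential up to an affine function, and equivariant isometries act through $\mathrm{GL}(2,\bbZ)\ltimes\bbR^2$ on $P$ (together with such affine modifications of $u$), it suffices to show that every $u$ solving this boundary value problem coincides, up to that freedom, with one of the Joyce--Donaldson potentials.

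The heart of the argument is the classification of solutions of Abreu's equation on a strictly unbounded $P$. The operator $\sum_{i,j}\partial_i\partial_j u^{ij}$ is a double divergence, so the equation can be integrated twice: the vector field $V^i:=\sum_j\partial_j u^{ij}$ is divergence-free, hence a skew-gradient on the simply connected domain $\interior P$, and combining this with the constraint that $(u^{ij})$ be an inverse Hessian --- most transparently after passing to the Legendre-dual K\"ahler potential, where $(u^{ij})$ becomes literally a Hessian and Abreu's equation becomes a linear second-order equation for its log-determinant --- reduces the fourth-order nonlinear equation to a genuinely linear one. I would then identify this linear datum with an axially symmetric harmonic function $w$ on an open subset of $\bbR^3$; this is precisely the reverse of the Biquard--Gauduchon construction, and closely related to LeBrun's description of scalar-flat K\"ahler surfaces with a Killing field. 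Thus scalar-flat toric K\"ahler structures on $P$ are in bijection with such $w$, subject to positivity (so that the recovered $(u^{ij})$ is positive definite).

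It remains to convert the data of $P$ and Guillemin's boundary conditions into constraints on $w$ and to use strict unboundedness to pin it down. Guillemin's condition along the facet with primitive inward normal $\nu_k$ forces $(u^{ij})$ to degenerate transversally in a prescribed way there, which in the $\bbR^3$ picture becomes a Green's-function singularity of $w$ along a prescribed segment of the axis, with position and weight determined by the lattice data of the edges and vertices of $P$; smooth closure of the metric over the toric divisors --- and the matching of the two local models at each vertex, where two facets meet --- imposes the compatibility of these pieces. This realises $w$ as a finite superposition of Green's functions together with a harmonic background term, and strict unboundedness is exactly what controls the asymptotics, forcing the background to be the affine-linear function permitted in Joyce's ansatz and excluding any extra bounded or periodic harmonic contribution. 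Feeding $w$ back through the construction returns a Donaldson generalised Taub-NUT symplectic potential; since convexity and the Guillemin conditions are automatically inherited, the corresponding Donaldson generalised Taub-NUT metric is equivariantly isometric to $g$.

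I expect two main obstacles. The first is the analytic--combinatorial step of passing from a local solution of the reduced linear equation to a globally defined axially symmetric harmonic function with exactly the right singular structure along the \emph{whole} axis: matching Guillemin's conditions facet by facet, and reconciling the two local models at each vertex of $P$, is delicate. The second is the behaviour at infinity, where strict unboundedness must be used decisively to exclude exotic solutions --- for a strip-like $P$ there would be room for extra bounded harmonic pieces, and a genuinely larger moduli space, so the hypothesis is not merely technical. A minor but necessary point is the uniqueness, in the non-compact setting, of the symplectic potential up to affine functions, which is what upgrades ``the same potential'' to ``equivariantly isometric''.
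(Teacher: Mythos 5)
You should first be aware that the paper does not prove this statement at all: Theorem (\ref{thm_us}) is quoted verbatim from \cite{s}, and the present paper only records the strategy of that proof in two sentences (Donaldson's result that every scalar-flat toric K\"ahler metric arises from Joyce's ansatz, followed by a classification of axi-symmetric harmonic functions with prescribed singularities, with convexity supplying a positivity statement). Your plan follows exactly that strategy, so in outline you are on the route actually taken. But two of your steps do not work as described, and one of them is the heart of the matter.

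First, the reduction to a linear problem is not achieved by passing to the Legendre-dual potential and observing that the equation for the log-determinant becomes linear; scalar-flatness says that $\log\det(\Hess u)$ is harmonic \emph{for the metric itself}, which is still a fully nonlinear condition on $u$. The actual linearisation (Joyce's ansatz, as recast in \cite{donjoyce}) is special to dimension four with $\bbT^2$-symmetry: when $\scal(g_K)=0$ the function $\rho=(\det\Hess u)^{-1/2}$ is harmonic for the restricted metric $g_\bbR$ on $\interior(P)$, its harmonic conjugate $z$ gives isothermal coordinates, and the moment coordinates $x_1,x_2$ become axi-symmetric harmonic functions of $(z,\rho)$ on $\bbH$. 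That every scalar-flat toric metric can be so written is a theorem of Donaldson which \cite{s} takes as an input rather than re-derives; your double-divergence observation produces closed one-forms but does not by itself deliver this structure. Second, and more seriously, the step you defer as an ``obstacle'' --- excluding extra harmonic contributions at infinity --- is precisely the content of the proof, and ``strict unboundedness controls the asymptotics'' is not a mechanism. The argument in \cite{s} (mirrored in the present paper's proof of Theorem (\ref{thm_main})) is a Liouville argument: one writes down the explicit reference solution dictated by the normals $\nu_i$ and the points $z_i$, forms the difference with the actual solution, divides the $\rho$-derivative by $\rho$ to obtain a function satisfying $\gamma_{\rho\rho}+\gamma_{zz}+3\gamma_\rho/\rho=0$, i.e.\ an axi-symmetric harmonic function on $\bbR^5$, and shows it is bounded. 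The boundedness is where convexity enters: positive-definiteness of $\Hess u$ gives a sign on $U_\rho/\rho$, which combined with the explicit growth of the reference solution yields a two-sided bound, and the mean value property on $\bbR^5$ then forces the difference to be constant, hence zero by the boundary conditions. A merely bounded harmonic perturbation is not excluded by the Guillemin conditions or by the shape of $P$ at infinity alone, so without this positivity-plus-Liouville step your argument does not close.
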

By using Joyce's ansatz, the theorem results from a classification of harmonic functions in Euclidean space with prescribed singularities. Toric geometry plays an important role and convexity is used to prove the positivity of an appropriate harmonic function. One of the main tools is a result due to Donaldson (see  \cite{donjoyce}) that any K\"ahler, scalar-flat, toric metric can be written via Joyce's ansatz.

The goal of this note is to apply the methods in \cite{s} to the setting of conformally K\"ahler, Ricci-flat, toric metrics. We shall use Biquard-Gauduchon's ansatz translated into the language of toric geometry in replacement of Joyce's ansatz. We will show that any conformally K\"ahler, Ricci-flat, toric metric can be written in terms of Biquard-Gauduchon's ansatz via a single axi-symmetric harmonic function on the upper half plane.
 
Our uniqueness result for such metrics is stronger than the result in \cite{bg} as it does not make use of the ALF hypothesis. In fact, contrary to most uniqueness results for special metrics on non-compact manifolds, our theorem holds without assumptions on asymptotics. To be more precise we prove the following.

\begin{thm}\label{thm_main}
Let $(M^4,J,\omega)$ be a compact toric K\"ahler surface. Let $D$ be a torus invariant divisor in $M.$ Let $X=M\setminus D$ and assume $X$ is endowed with two $J$-Hermitian, Ricci-flat,  {torus invariant} metrics $g$ and $\tilde{g}$ whose adapted K\"ahler forms\footnote{See the discussion following the statement of the Theorem, or Definition (\ref{def:adapted}) later in the text, for a precise definition of adapted K\"ahler form.} are cohomologous to $[\omega|_X]$. Then $g$ and $\tilde{g}$ are equivariantly isometric.  
\end{thm}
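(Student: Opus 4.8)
The plan is to run the strategy behind Theorem \ref{thm_us} (that is, of \cite{s}), with Biquard--Gauduchon's ansatz in place of Joyce's.

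\textbf{Step 1: reduction to the ansatz.} By the Riemannian Goldberg--Sachs theorem and Derdzinski's work, a $J$-Hermitian Ricci-flat metric is conformally K\"ahler: there is a positive $\bbT^2$-invariant function $\phi$ on $X$, namely a constant multiple of $|W^+_g|^{1/3}$, so that $g_K:=\phi^2 g$ is K\"ahler with respect to $J$. Write $\omega_g$ for the K\"ahler form of $g_K$, the adapted K\"ahler form of $g$. Then $(X,J,\omega_g)$ is a non-compact toric K\"ahler surface, carrying a moment polytope $P_g\subset\bbR^2$ whose normal fan is the fan of $X=M\setminus D$, fixed once and for all by $M$ and $D$. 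By the toric form of the Biquard--Gauduchon construction established earlier, $g$ is recovered from the pair $(P_g,u_g)$, where $u_g$ is a single axi-symmetric harmonic function on a domain $\Omega_{P_g}\subset\bbH$; smoothness of $g$, $g_K$ and $\phi$ on all of $X$ translates into $u_g$ having prescribed singularities along the boundary axis of $\bbH$ --- one ``rod'' per edge of $P_g$, with coefficient dictated by the lattice data at the adjacent vertices --- and into $u_g$ being positive. The same applies to $\tilde g$, producing $(P_{\tilde g},u_{\tilde g})$.

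\textbf{Step 2: the polytope is pinned down by the hypothesis.} For a toric surface with a fixed fan, the class in $H^2(X;\bbR)$ of a $\bbT^2$-invariant K\"ahler form is read off from the supporting constants of its moment polytope, and conversely those constants are determined by the class up to the affine freedom (translations, and symmetries of the fan) that does not change the underlying toric K\"ahler metric. Hence $[\omega_g|_X]=[\omega|_X]=[\tilde\omega_g|_X]$ forces $P_g$ and $P_{\tilde g}$ to coincide after such an affine change; normalising, we take $P_g=P_{\tilde g}=:P$. In particular $\Omega_{P_g}=\Omega_{P_{\tilde g}}=:\Omega$, and the singularity data and the positivity requirement imposed on $u_g$ and $u_{\tilde g}$ are the same.

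\textbf{Step 3: uniqueness of the harmonic function, and conclusion.} It remains to prove $u_g=u_{\tilde g}$. Both are positive, axi-symmetric, harmonic on $\Omega$, with identical rod-and-corner singularities on $\partial\Omega$, so the difference $w:=u_g-u_{\tilde g}$ is axi-symmetric harmonic on $\Omega$ with cancelling singular parts; by B\^ocher-type removable-singularity results along the axis, $w$ extends harmonically across the entire rod-and-corner structure. One then needs a Liouville-type statement: such a $w$ vanishes. This is exactly where, as in \cite{s}, convexity of $P$ enters: it is what forces positivity of $u_g$ and $u_{\tilde g}$ and, together with geodesic completeness of $g$ and $\tilde g$, controls the growth of $u_g,u_{\tilde g}$ --- hence of $w$ --- at the non-compact ends of $\Omega$, so that a maximum principle (or Harnack at infinity) gives $w\equiv0$. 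Equivalently, one can transport the problem to the symplectic potentials of $g_K$ and $\tilde g_K$ and show their difference is affine-linear. With $P$ and $u_g=u_{\tilde g}$ both fixed, the Biquard--Gauduchon ansatz rebuilds $g$ and $\tilde g$ from identical data, so they differ only by the $\bbT^2$-action (and at most a fan symmetry): they are equivariantly isometric.

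\textbf{Expected main obstacle.} Step 3 is the heart of the matter: uniqueness of the axi-symmetric harmonic function \emph{without} any asymptotic assumption, which is precisely what makes this stronger than Theorem \ref{thmBG}. The danger is a nonzero harmonic $w$ meeting every rod and corner condition but growing at an end of $\Omega$; excluding it must use the toric structure --- convexity-driven positivity of $u_g,u_{\tilde g}$ together with completeness --- rather than a decay hypothesis. A secondary technical point is the bookkeeping that translates the Delzant conditions of $P$ and smoothness of the metric on $X$ into the precise rod/corner data of $u_g$, and that checks this data does not depend on which metric within the conformal class one starts from.
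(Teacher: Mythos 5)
Your overall strategy is the same as the paper's: pass to the adapted K\"ahler metric, encode everything in an axisymmetric harmonic function on the half-plane via the Biquard--Gauduchon ansatz, use the cohomology hypothesis to fix the polytope, and then prove the two harmonic functions coincide. However, two of your assertions are actually substantive steps of the proof, and as stated they are gaps. The first is in Step 2: you claim that once $P_g=P_{\tilde g}$, ``the singularity data \ldots imposed on $u_g$ and $u_{\tilde g}$ are the same.'' This is not automatic. In this construction the harmonic function lives on all of $\bbH$; each edge $E_i$ of $P\setminus E$ maps to a segment $[z_{i-1},z_i]$ of the axis, and while the slopes of the boundary function $f=\lim_{\rho\to 0}U/\log(\rho^2)$ on these segments equal $\det(\fut,\nu_i)$ and are therefore fixed by the fan, the break points $z_i$ are \emph{not} determined by the polytope a priori --- they depend on the metric. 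The paper needs a separate lemma to show $z_i=\tilde z_i$: it computes the volume of the invariant sphere $S_i=\bx^{-1}(E_i)$ (a cohomological quantity, hence the same for both metrics) in terms of $z_{i-1},z_i,f(z_{i-1}),f(z_i)$ using the auxiliary moment coordinate $\mu$ of Biquard--Gauduchon, and inverts this to recover the $z_i$ from the edge lengths. Without this, your Step 3 would be comparing two harmonic functions with possibly different singular sets and could not even begin.

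The second gap is the Liouville step itself, which you correctly flag as the heart of the matter but for which the mechanism you propose (completeness plus a maximum principle or Harnack at infinity) is not obviously workable, precisely because no asymptotics are assumed. The paper's actual route is: (i) prove $\scal(g_K)>0$ on $X$ by evaluating LeBrun's Bach-flat identity $0=-6\scal(g_K)\Lap\scal(g_K)-12|\grad\scal(g_K)|^2+(\scal(g_K))^3$ at a putative interior minimum; (ii) write down the explicit reference function $U^{ref}$ with the same boundary data $f$; (iii) observe that $\gamma=(U^{ref}_\rho-U_\rho)/\rho$ is an axisymmetric harmonic function on $\bbR^5$ --- dividing by $\rho$ shifts the axisymmetric Laplacian from three to five dimensions and lets $\gamma$ extend smoothly across the axis; (iv) use $U_\rho/\rho=1/(\rho^2\scal(g_K))>0$ to get the one-sided bound $\gamma\le U^{ref}_\rho/\rho\le CR/\rho^2$, and then the mean value property on spheres in $\bbR^5$ to conclude that $\gamma$ is bounded, hence constant, hence zero by matching boundary data. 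The positivity of the scalar curvature and the passage to five dimensions are exactly the ideas that substitute for the missing decay hypothesis; neither appears in your sketch, so as written the key uniqueness statement remains unproved.
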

It follows from the Riemannian Goldberg-Sachs Theorem \cite{l0,AG} that such Hermitian and Einstein $4$-manifolds are necessarily conformally K\"ahler. Therefore, $g=e^{\sigma}g_K$ for a smooth function $\sigma$, which we can normalise to satisfy $\sigma(x)=0$ at some fixed $x \in X$, and a K\"ahler metric $g_K$ which together with $J$ determines a K\"ahler form $\omega_K$ which we call the \emph{adapted K\"ahler form}. Notice that this is uniquely determined by imposing that $\sigma$ vanishes at $x \in X$. In our theorem, we require that the adapted forms for the two conformally K\"ahler, Ricci-flat, toric metrics be in the same cohomology class. The  K\"ahler metric $g_K$ together with $J$ determines a symplectic form $\omega.$ If $J$ and $g$ are torus invariant then, as we will see, so is  $g_K$ and therefore $\omega.$ The torus action must be Hamiltonian for $\omega$ and it therefore defines a moment polytope which is of Delzant type. The cohomology class of the adapted form fixes a moment polytope for the torus action on $(M,J)$, up to $SL(2,\bbZ)$ transformations and translations, because by Moser's trick cohomologous equivariant forms are equivariantly symplectomorphic and therefore have the same moment polytope, again up to $SL(2,\bbZ)$ transformations and translations. Furthermore, we remark that torus invariant divisors $D$ as in the statement, correspond to moment map pre-images of codimension-$1$ faces of the moment polytope.

A canonical quantity which is associated with $(M^4,D,J,\omega)$ is the extremal vector field of $X=M\setminus D$ which can be thought of as a vector in $\bbR^2\simeq \left(\text{Lie}(\bbT^2)\right)^*$. This is essentially the same as the Futaki invariant of $X=M/D.$ We will give its precise definition ahead. The important fact for now is that it depends only on the symplectic structure and can be calculated using the Delzant polytope.

\begin{thm}\label{thm_main2}
{Let $(M^4,J,\omega)$ be a compact toric K\"ahler surface and $D$ a torus invariant divisor in $M.$ Assume that  $X=M\setminus D$ carries a $J$-Hermitian, Ricci-flat, toric metric whose adapted K\"ahler form is cohomologous to $[\omega|_X]$. Further, suppose that the extremal vector field of the adapted K\"ahler form is not normal to exactly one of the non-compact edges of the moment polytope of $X$. Then, the Hermitian metric on $X$ is equivariantly isometric to one of the Biquard-Gauduchon metrics from Theorem (\ref{thmBG})}.
\end{thm}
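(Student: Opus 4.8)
The plan is to run the strategy of \cite{s} with the toric form of the Biquard--Gauduchon ansatz in place of Joyce's, and then to use the hypothesis on the extremal vector field to collapse the resulting family of axi-symmetric harmonic functions onto the four families of Theorem \ref{thmBG}. The first step is to pass to the adapted K\"ahler metric: by the Riemannian Goldberg--Sachs theorem and Derdzinski's work \cite{l0,AG}, the Hermitian Ricci-flat metric is conformally K\"ahler, $g=e^{\sigma}g_K$, with $g_K$ the adapted K\"ahler metric. Since Derdzinski's construction is canonical and $\sigma$ is built out of the scalar curvature of $g_K$, torus invariance of $g$ and $J$ forces that of $g_K$ and of $\sigma$, so $g_K$ is a Bach-flat, extremal, toric K\"ahler metric on $X$ whose extremal vector field is the invariant appearing in the statement and is computable from the moment polytope $P$ of $(X,\omega_K)$ alone.

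Next I would bring $g$ into Biquard--Gauduchon form. Using the reverse construction --- the assertion, developed earlier in the paper, that every conformally K\"ahler Ricci-flat toric metric arises from the toric translation of the Biquard--Gauduchon ansatz --- the metric $g$ is encoded, in action-angle coordinates, by a single axi-symmetric harmonic function $\phi$ on the upper half plane $\{(r,z):r>0\}$, i.e. $\partial_r^2\phi+r^{-1}\partial_r\phi+\partial_z^2\phi=0$. Its logarithmic singularities lie along $\{r=0\}$ and are organised into ``rods'' indexed by the edges of $P$: a compact edge gives a finite rod, a non-compact edge a semi-infinite rod, and a vertex a rod endpoint; the primitive inward normals prescribe the residues, the interior endpoints are the images of the torus-fixed points, and the extremal vector field governs the rod data attached to the two ends, so that the hypothesis that it be normal to none, rather than to exactly one, of the non-compact edges becomes a non-degeneracy condition on the semi-infinite rods. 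Positive-definiteness and completeness of $g_K$ --- which in \cite{s} was secured by a positivity property of an appropriate harmonic function, proved there via convexity of the polytope --- follow here by the same convexity argument.

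It then remains to identify $\phi$. The non-degeneracy just noted rules out the rod configurations for which $\phi$ cannot close up to a smooth complete metric and, combined with the Delzant integrality of the normals and with harmonicity, forces $P$ to have exactly two non-compact edges and at most two compact ones, pinning $\phi$ down --- up to the action of $SL(2,\bbZ)$, translation and scaling --- to one of the explicit axi-symmetric harmonic functions of \cite{bg}: one whose singular set on $\{r=0\}$ is made of the two semi-infinite rods together with zero, one or two finite rods, producing respectively the reversed Taub--NUT, the Kerr--Taub--bolt, and the Chen--Teo metrics. Thus $P$ is $SL(2,\bbZ)$-equivalent to one of the four polytopes in the figure, the corresponding Biquard--Gauduchon metric $\tilde{g}$ lives on $X$ with adapted K\"ahler form cohomologous to $\omega_K$, and Theorem \ref{thm_main} gives that $g$ is equivariantly isometric to $\tilde{g}$; in particular $g$ is ALF.

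The real work, I expect, lies in two analytic points. First, the reverse construction itself: proving that \emph{every} conformally K\"ahler Ricci-flat toric metric fits the Biquard--Gauduchon ansatz --- the Ricci-flat counterpart of Donaldson's theorem that every scalar-flat toric K\"ahler metric fits Joyce's ansatz --- together with the positivity argument that delimits the admissible singularity patterns of $\phi$. Second, the combinatorial reduction: showing that the extremal-vector-field hypothesis genuinely narrows those patterns down to exactly the Biquard--Gauduchon list, and not to a larger family.
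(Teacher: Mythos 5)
Your overall strategy is the right one and matches the paper's in outline: pass to the adapted extremal, Bach-flat toric K\"ahler metric $g_K$, encode the geometry in a single axi-symmetric harmonic function on $\bbH$ with prescribed logarithmic boundary behaviour, identify that function with one of the explicit Biquard--Gauduchon functions, and close with the uniqueness statement of Theorem (\ref{thm_main}). However, there is a genuine gap at exactly the point you flag as ``the combinatorial reduction,'' and you misidentify what the missing work actually is. The hypothesis on the extremal vector field is \emph{not} used to rule out rod configurations or to bound the number of compact edges of $P$ (your claim that Delzant integrality plus harmonicity forces at most two compact edges is neither proved nor needed, and the restriction to the four families is ultimately imported from Biquard--Gauduchon's own extension analysis via Theorem (\ref{thmBG2})). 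What the hypothesis actually does is the following: if $\det(\fut,\nu_1)\neq 0$ and $\det(\fut,\nu_d)\neq 0$, then $\scal(g_K)$ must vanish along the removed edge $E=E_{d+1}$, so $\fut$ is proportional to $\nu_{d+1}$, and the Delzant smoothness conditions $\det(\nu_1,\nu_{d+1})=\det(\nu_{d+1},\nu_d)=1$ force $\det(\fut,\nu_1+\nu_d)=0$. This kills the linear term $Bz$ in the boundary function, i.e.\ it is precisely what puts $f$ into the form $A+\sum_i a_i\lvert z-z_i\rvert$ demanded by the ansatz. Your proposal never explains how the slopes of $f$ at $z\to-\infty$ and $z\to+\infty$ become opposite, which is where the hypothesis enters.

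The second missing ingredient is the pair of sign conditions $a_i>0$ and $A>0$, without which Theorem (\ref{thmBG2}) does not apply. In the paper these come from: (i) LeBrun's maximum-principle argument showing $\scal(g_K)>0$ on $X$ (which itself needs the case analysis on where the affine function $\scal(g_K)$ attains its minimum relative to $E$); (ii) the positivity of the quantity $V=-\bigl(\rho U_\rho+U_\rho^2 U_{zz}/(U_{\rho z}^2+U_{zz}^2)\bigr)$, which forces $U_{zz}<0$, hence concavity of $z\mapsto U(z,\rho)$ and convexity of $f$, hence $a_i>0$; and (iii) the explicit computation $V=1+2A/R$ for $U^{\mathrm{ref}}$, whose positivity forces $A>0$. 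Your appeal to ``the same convexity argument'' as in \cite{s} addresses positivity of a harmonic function, not these coefficient inequalities, so as written the proposal does not establish that the harmonic function you produce is one of those admitted by the Biquard--Gauduchon construction.
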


In particular, under the above assumptions all Hermitian, Ricci-flat, toric metrics are ALF. 
Recall that such metrics are conformally K\"ahler. Then, the adapted K\"ahler form $\omega_K$ lies in the class $[\omega |_X]$ for $X=M\setminus D$, where $(M,J,\omega)$ is compact K\"ahler and $D \subset M$ is a torus invariant divisor. In particular, the moment polytope of $(X,\omega_K)$ is obtained by removing an edge from the moment polytope of $(M,\omega)$. On the other hand, there are conformally K\"ahler, Ricci-flat, toric metrics which are not ALF. Consider the Eguchi-Hanson metric on the total space of the bundle $\mathcal{O}(-2)$ over $\mathbb{CP}^1$. It is K\"ahler with respect to the standard complex structure on $\mathcal{O}(-2).$ Now $\mathcal{O}(-2)$ admits a different complex structure yielding the reversed orientation, which is biholomorphic to $\mathcal{O}(2)$. The Eguchi-Hanson metric is Hermitian with respect to this complex structure. Now $\mathcal{O}(2)$ is biholomorphic to $\cp^2$ minus a point which is biholomorphic to $\bbH_1\setminus D$ for a torus invariant divisor $D$ of self-intersection $2.$ But the adapted K\"ahler form associated to this reversed Eguchi-Hanson metric determines a polytope which is a $2$-simplex minus a vertex. This is not the moment polytope of a manifold of the form $X=M\setminus D$ for $(M^4,J)$ toric and $D$ a torus invariant divisor in $M.$ The K\"ahler form associated to the conformally K\"ahler metric is not equivariantly symplectomorphic to any symplectic form on $M\setminus D.$ Another way to think of this is to note that if the Eguchi-Hanson metric were conformal to a K\"ahler metric on $\bbH_1\setminus D,$ the corresponding cohomology class would integrate to zero on the removed divisor which is not possible. 

Recently, Mingyang Li (see \cite{m}) obtained uniqueness results for conformally K\"ahler, Ricci-flat, ALE metrics. Elsewhere (see \cite{m2}), Li also generalised Biquard-Gauduchon's  classification result to all ALF, conformally K\"ahler, Ricci-flat metrics by showing that such metrics are necessarily toric. Our results only apply to the toric setting but we make no assumption on asymptotic growth. 

\section{Background}
\subsection{Conformally K\"ahler Einstein metrics}
Since we make use of several results from \cite{d}, we will start by briefly stating those in this subsection. For more details and proofs see \cite{d}.

Let $(X,g)$ be any Riemannian manifold. There is a splitting induced by the Hodge star corresponding to metric on the space $\Lambda^2T^*X.$ This lets us define self-dual and anti-self-dual forms i.e. the $\pm1$ eigenspaces of the Hodge star on $\Lambda^2T^*X.$ According to this splitting, the self-dual part of the Weyl curvature $W$ which we denote by $W^+$ is the part of the endomorphism $W$ of  $\Lambda^2T^*X$ which takes $\Lambda^{2+}T^*X,$ to itself.  Derdzinski interprets $W^+$ at a point $x\in X$ as an endomorphism of $\Lambda^{2+}T_x^*X,$ the space of self-dual sections of $\Lambda^2T_x^*X.$ At each $x\in X$, $\Lambda^{2+}T_x^*X,$ has dimension $3$ and $W^+$ has a real spectrum. 
\begin{defn}
The Riemannian manifold $(X,g)$ is said to be half-algebraically simple if at each point in $X,$ $W^+$ has at least one double eigenvalue. In this case we write $\sharp(\spec(W^+))\leq 2.$
\end{defn} 
When $X$ is a complex manifold we say that a metric $g$ on $X$ is Hermitian if $J$ preserves $g$ i.e.
$$
g_x(JV_x,JW_x)=g_x(V_x,W_x), \, \forall x\in \, X, \, V_x, W_x\in T_xX.
$$
When $(X,J, \omega, g_K)$ is K\"ahler, any metric conformal to $g_K$ is Hermitian. It follows from a Riemannian version of the Goldberg-Sachs Theorem \cite{l0,AG} that $4$-dimensional Hermitian Einstein metrics are also conformally K\"ahler.

We gather several results which are due to Derdzinski in a theorem below. 
\begin{thm}[Derdzinski]\label{Thmderd1}
Let $(M,\omega, g_K)$ be a K\"ahler manifold. 
\begin{itemize}
\item The Hermitian metric $\frac{g_K}{\scal^2(g_K)}$ is half-algebraically simple and satisfies $\delta W^+=0,$ where defined.
\item If $e^\sigma g_K$ satisfies $\delta W^+=0,$ then $e^{-\sigma}$ is a constant multiple of $\scal^2(g_K)$ on the subset where $W^+\ne 0.$
\item The Hermitian metric $\frac{g_K}{\scal^2(g_K)}$ is Einstein iff its Bach tensor vanishes identically.
\item If $\frac{g_K}{\scal^2(g_K)}$ is Einstein, then $\nabla\scal(g_K)$ is holomorphic and the metric $g_K$ is extremal in the sense of Calabi.
\end{itemize}
\end{thm}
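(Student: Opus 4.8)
The plan is to deduce all four assertions from the special structure of the self-dual Weyl curvature of a K\"ahler surface together with the conformal behaviour, in dimension four, of $W^+$ and of the Bach tensor; this is Derdzinski's argument in \cite{d}, and I indicate the steps. First I would record that, for a K\"ahler surface $(M,J,\omega,g_K)$, the self-dual Weyl tensor, viewed as a trace-free symmetric endomorphism of the rank-three bundle $\Lambda^{2+}T^*M$, acts as multiplication by $\tfrac{\scal(g_K)}{6}$ on the line spanned by $\omega$ and by $-\tfrac{\scal(g_K)}{12}$ on its orthogonal complement. In particular $W^+$ is everywhere half-algebraically simple; and since in dimension four the subbundle $\Lambda^{2+}T^*M\subset\Lambda^2T^*M$ is conformally invariant while $W^+$ only rescales by a positive function under a conformal change, half-algebraic simplicity is inherited by every metric in the conformal class, in particular by $g_K/\scal^2(g_K)$ wherever the latter is defined. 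For the identity $\delta W^+=0$ I would combine the second Bianchi identity---which expresses $\delta W^+$ through the Cotton tensor, i.e.\ the exterior derivative of the Schouten tensor---with the dimension-four conformal transformation law relating $\delta_{e^{2f}g}W^+$ to $\delta_g W^+$, $\nabla f$ and $W^+$. For a K\"ahler metric both sides are controlled by $d\scal(g_K)$ through the K\"ahler identities, and a direct computation shows that the choice $e^{f}=\scal^{-2}(g_K)$ makes $\delta W^+$ vanish on $\{\scal(g_K)\neq 0\}=\{W^+\neq 0\}$.

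The uniqueness statement then drops out of the same transformation law: if $e^{\sigma}g_K$ also satisfies $\delta W^+=0$, subtracting the two instances gives $\iota_{\nabla h}W^+=0$, with $h$ a fixed linear combination of $\sigma$ and $\log\scal^2(g_K)$; on $\{W^+\neq 0\}$ the endomorphism $W^+$ of $\Lambda^{2+}T^*M$ has two distinct eigenvalues, so this contraction forces $dh=0$, and hence $e^{-\sigma}$ is a constant multiple of $\scal^2(g_K)$ on that set. For the Einstein/Bach equivalence I would use that the Bach tensor is conformally invariant in dimension four and that every Einstein metric is Bach-flat, which gives one implication for free; for the converse, the hypothesis $\delta W^+=0$ (from the first step) reduces the Bach tensor of $g_K/\scal^2(g_K)$ to an algebraic pairing of $W^+$ with the trace-free Ricci tensor, and half-algebraic simplicity---one simple and one double eigenvalue on $\{W^+\neq 0\}$---allows one to invert this pairing and conclude that the trace-free Ricci vanishes there, hence everywhere by continuity across the zero locus of $W^+$. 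Thus $g_K/\scal^2(g_K)$ is Einstein exactly when its Bach tensor, equivalently (again by conformal invariance) that of $g_K$, vanishes identically. Finally, for the last assertion I would write the Bach equation of the K\"ahler metric itself in terms of $J$, $\omega$ and $\scal(g_K)$; via the K\"ahler identities it becomes the vanishing of the $(0,2)$-part of $\Hess\scal(g_K)$, equivalently $\op\big(\nabla^{1,0}\scal(g_K)\big)=0$, i.e.\ $\nabla\scal(g_K)$ is a holomorphic vector field, which is precisely Calabi's condition for $g_K$ to be extremal.

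The main obstacle is the middle of this programme: the curvature bookkeeping---writing $\delta W^+$ and the Bach tensor of conformally-K\"ahler metrics explicitly in terms of $\scal(g_K)$, $J$ and $\omega$ via the K\"ahler identities---and, more delicately, the analysis on the locus $\{\scal(g_K)=0\}$, where $W^+$ vanishes, the rescaling $g_K/\scal^2(g_K)$ degenerates, and the two eigenvalues of $W^+$ collide, so that the algebraic inversions above break down and one must argue by continuity or a local expansion. Since exactly these computations and the analysis near the zero set are carried out in \cite{d}, I would defer to that reference for the details.
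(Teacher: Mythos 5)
The paper gives no proof of this theorem: it is stated as background quoted from Derdzinski, with the reader referred to \cite{d} for details, so there is nothing in the text to compare your argument against step by step. Your sketch is a correct outline of Derdzinski's actual argument --- the eigenvalues $\scal(g_K)/6,\,-\scal(g_K)/12,\,-\scal(g_K)/12$ of $W^+$ on $\Lambda^{2+}$, the four-dimensional conformal transformation laws for $W^+$, $\delta W^+$ and the Bach tensor, and the identification of the zero loci of $W^+$ and $\scal(g_K)$ --- and it defers to the same reference for the computations; the one imprecision is in the last bullet, where for a K\"ahler surface Bach-flatness is strictly stronger than the vanishing of the $J$-anti-invariant part of $\Hess\scal(g_K)$ (it imposes an additional $J$-invariant scalar equation), but since you only need the implication Bach-flat $\Rightarrow$ extremal, your argument still goes through.
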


Another result of Derdzinski's is the following.
\begin{thm}[Derdzinski]\label{Thmderd2}
Let $(X,g)$ be an oriented, half-algebraically simple, Einstein manifold. Let $\lambda$ be the only eigenvalue of $W^+$ which is not double and $\omega$ be the corresponding eingenform. Then $(X,\lambda^{2/3}g,\lambda^{2/3}\omega)$ is K\"ahler.
\end{thm}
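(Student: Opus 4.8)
The plan is to work on the open set $U=\{x\in X:W^+_x\neq 0\}$, where half-algebraic simplicity genuinely provides a distinguished simple eigenvalue. Since $W^+$ is trace-free as an endomorphism of the rank-$3$ bundle $\Lambda^{2+}T^*X$, on $U$ its eigenvalues are $\lambda,-\lambda/2,-\lambda/2$ with $\lambda$ a smooth nowhere-vanishing function, and the $-\lambda/2$-eigenspace is exactly $\Lambda^{2+}\cap\omega^\perp$. Normalise the $\lambda$-eigenform so that $|\omega|_g^2=2$ pointwise; being a self-dual $2$-form of constant length it determines an orthogonal almost complex structure $J$ inducing the given orientation, via $\omega=g(J\cdot\,,\cdot)$, and orthogonality of $J$ is precisely the statement that $g$ is $J$-Hermitian. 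One records the algebraic normal form
\[
W^+=\tfrac{3\lambda}{4}\,\omega\otimes\omega-\tfrac{\lambda}{2}\,\mathrm{Id}_{\Lambda^{2+}},\qquad\text{equivalently}\qquad \omega\otimes\omega=\tfrac{4}{3\lambda}W^++\tfrac{2}{3}\,\mathrm{Id}_{\Lambda^{2+}}.
\]

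Next I would bring in the Einstein condition. For an Einstein $4$-manifold the contracted second Bianchi identity gives $\delta W=0$, and since the Hodge decomposition $\Lambda^2=\Lambda^{2+}\oplus\Lambda^{2-}$ is parallel this descends to $\delta W^{+}=0$. Because $*$ is parallel, $\nabla_X\omega$ is again self-dual; because $|\omega|$ is constant, $\nabla_X\omega\perp\omega$; hence $\nabla_X\omega\in\Lambda^{2+}\cap\omega^\perp$, where $W^+$ acts as the scalar $-\lambda/2$. Differentiating $W^+(\omega)=\lambda\,\omega$ and projecting onto $\omega^{\perp}$ therefore yields $\bigl[(\nabla_X W^+)(\omega)\bigr]^{\perp}=\tfrac{3\lambda}{2}\,\nabla_X\omega$, so that $\nabla\omega$ is completely determined by $\nabla W^+$ and $d\lambda$ (equivalently, differentiate the second displayed identity, using $\nabla(\omega\otimes\omega)=2\,\omega\odot\nabla\omega$). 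Feeding in $\delta W^{+}=0$, together with $|W^+|^2=\tfrac{3}{2}\lambda^2$ to trade $d\lambda$ for $d\log|\lambda|$, the computation should collapse to an explicit first-order identity for $\nabla\omega$ in terms of $d\log|\lambda|$ alone, of exactly the shape that makes $\widetilde\omega:=\lambda^{2/3}\omega$ parallel for $\widetilde g:=\lambda^{2/3}g$ (note $\lambda^{2/3}$ is a constant multiple of $|W^+|^{2/3}$, hence smooth and positive on $U$).

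To finish: an almost-Hermitian structure whose fundamental form is $\nabla$-parallel is Kähler, so $(U,\widetilde g,J,\widetilde\omega)$ is Kähler. Since the phrasing of the hypothesis — that $W^+$ has a well-defined ``only eigenvalue which is not double'' — requires $W^+\neq 0$ at every point for $\lambda$ and $\omega$ to make sense, in fact $U=X$ and the theorem follows; if one instead allows $W^+$ to vanish on a proper closed subset, $U$ is open and dense and $\widetilde g,\widetilde\omega$ extend continuously, $\lambda^{2/3}$ being continuous on all of $X$.

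The main obstacle is the middle step, and it is genuinely delicate: one must show that $\delta W^+=0$ pins down the \emph{entire} covariant derivative $\nabla\omega$, not merely its skew part $d\omega$. Knowing $d\omega$ alone only identifies the Lee form, i.e. kills the $\mathcal W_4$ (locally conformally Kähler) component, and would leave open a non-integrable $\mathcal W_2$ (Kodaira--Thurston-type, symplectic) component of $\nabla\omega$, obstructing integrability of $J$; excluding this is where one must exploit the full second Bianchi identity in concert with the algebraic constraint that $\omega$ is precisely the $W^+$-eigenform of the simple eigenvalue. Carrying out this tensor computation cleanly — best organised around the rescaled metric $\widetilde g$, for which the distinguished eigenform is meant to become parallel — is the heart of the matter.
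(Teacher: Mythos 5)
First, a point of reference: the paper itself does not prove this statement --- it is quoted as a known theorem of Derdzinski with a citation to \cite{d} --- so the only thing to compare your argument against is Derdzinski's original one. Your outline follows its architecture correctly: restrict to $\{W^{+}\neq 0\}$, write $W^{+}=\tfrac{3\lambda}{4}\,\omega\otimes\omega-\tfrac{\lambda}{2}\,\mathrm{Id}_{\Lambda^{2+}}$ for the normalised simple eigenform, obtain $\delta W^{+}=0$ from the Einstein condition via the contracted second Bianchi identity (this is exactly the computation the paper does reproduce, in Section 3), and aim to show that $\nabla\omega$ is forced to have the conformal shape that makes $\lambda^{2/3}\omega$ parallel for $\lambda^{2/3}g$. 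Your algebraic preliminaries (the normal form, $|W^{+}|^{2}=\tfrac{3}{2}\lambda^{2}$, the fact that $\nabla_{X}\omega\in\Lambda^{2+}\cap\omega^{\perp}$, and the identity $[(\nabla_{X}W^{+})(\omega)]^{\perp}=\tfrac{3\lambda}{2}\nabla_{X}\omega$) are all correct.

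However, the proof is not complete, and you say so yourself: the step that kills the $\mathcal{W}_{2}$ (Nijenhuis) component of $\nabla\omega$ is announced rather than performed. The identity $[(\nabla_{X}W^{+})(\omega)]^{\perp}=\tfrac{3\lambda}{2}\nabla_{X}\omega$ does not by itself close the argument, because $\delta W^{+}$ is only one trace of $\nabla W^{+}$ and a priori does not control $(\nabla_{X}W^{+})(\omega)$ for every $X$. The missing idea --- the actual engine of Derdzinski's proof --- is that the degenerate normal form converts that single trace into full information: expanding $0=\nabla^{i}W^{+}_{ijkl}$ with $W^{+}_{ijkl}=\tfrac{3\lambda}{4}\omega_{ij}\omega_{kl}-\tfrac{\lambda}{2}(\mathrm{Id})_{ijkl}$ produces, besides terms in $d\lambda$ and $\delta\omega$, the term $\tfrac{3\lambda}{4}\,\omega_{ij}\nabla^{i}\omega_{kl}$, and since $\omega_{i}{}^{j}$ is (up to sign) the invertible endomorphism $J$, this is $(\nabla_{J\cdot}\omega)_{kl}$. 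Hence the one divergence equation determines $\nabla_{X}\omega$ for \emph{all} $X$, as an explicit expression linear in $d\log|\lambda|$: contracting with $\omega^{kl}$ recovers $\delta\omega=-\tfrac{2}{3}\iota_{\nabla\log|\lambda|}\omega$ (the Lee-form, or $\mathcal{W}_{4}$, part), while the component orthogonal to $\omega$ shows there is no residual $\mathcal{W}_{2}$ part. Only with that formula in hand does the rescaling by $\lambda^{2/3}$ give a parallel form, hence a K\"ahler structure. So: right strategy, correct set-up, but the decisive computation --- the one you yourself flag as ``the heart of the matter'' --- is precisely what is absent.
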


Next, we introduce the following notion which is convenient in stating our results.

\begin{defn}\label{def:adapted}
	Let $(X,g,J)$ be conformally K\"ahler and $x \in X$. Then, $g=e^{\sigma}g_K$ for a smooth function $\sigma$ satisfying $\sigma(x)=0$ and a K\"ahler metric $g_K$ which together with $J$ determines a K\"ahler form $\omega_K$. In this situation, we shall say that $\omega_K$ is the K\"ahler form adapted to $g$ and $x$.
\end{defn}

\begin{rem}
	In order to have uniqueness in the definition of the adapted K\"ahler form we have fixed $\sigma(x)=0$ in the previous definition. Otherwise, the adapted K\"ahler form would only be well defined up to scaling.
\end{rem}

\subsection{Toric K\"ahler surfaces}
We give a very brief overview of toric K\"ahler geometry and action-angle coordinates mainly to set up notation. For more on this see \cite{a1}. 

A K\"ahler manifold $(X^4,\omega,g_K)$ is said to be toric if it admits an effective, isometric, Hamiltonian $\bbT^2$-action whose moment map is proper. Under such assumptions, the moment map 
$$
\bx=(x_1,x_2):X\rightarrow \bbR^2
$$
has as image a convex polytope $P$ of the form
$$
P=\{(x_1,x_2)\in \bbR^2: l_i(x_1,x_2):=\langle (x_1,x_2),\nu_i\rangle - \lambda_i\geq 0, i=1,\cdots, d+1\},
$$
where $\nu_i$ are the primitive, interior facet normals to $P.$ At each vertex of $P$ the facet normals corresponding to the facets meeting at that vertex say $\nu_j$ and $\nu_{j+1}$ for $j=1,\cdots, d$ satisfy
$$
\det(\nu_j,\nu_{j+1})=-1.
$$
The pre-image via $\bx$ of each facet of $P$ is a $\bbT^2$-invariant $\cp^1$ i.e. a divisor. 

On the pre-image of the interior of $P,$ the $\bbT^2$-action is free so that 
$$
\bx^{-1}(\interior(P))\simeq \interior(P)\times \bbT^2.
$$
Note that this set is dense in $X.$ Let $(\theta_1,\theta_2)$ denote angle coordinates on $\bbT^2. $ Then $(x_1,x_2,\theta_1,\theta_2)$ are the so-called action-angle coordinates coordinates on $\bx^{-1}(\interior(P))$. Action-angle coordinates are Darboux coordinates for $\omega$ i.e. 
$$
\omega=dx_1\wedge d\theta_1+dx_2\wedge d\theta_2.
$$
As for the metric $g_K,$ it is known that it can be expressed as 
$$
g_K=\sum_{i,j=1}^2 u_{ij}dx_i\otimes dx_j+u^{ij}d\theta_i\otimes d\theta_j
$$
where $u:\interior(P)\mapsto \bbR$ is a smooth convex function, $u_{ij}$ denote the entries of the Hessian of $u$ and $u^{ij}$ denote the entries of the inverse of the Hessian of $u.$ This is explained in \cite{s} for instance. The function $u$ is called the symplectic potential of the metric $g_K.$ The boundary behaviour of $u$ is determined by $P$. In particular,
$$
u-\sum_{i=1}^{d+1}l_i\log(l_i)
$$ 
is smooth up to the boundary of $P.$ Moreover, it has been shown that there is a smooth, positive function which we denote by $\delta$ such that,
$$
\det(\Hess u)=\frac{\delta}{\prod_{j=1}^{d+1}l_j}.
$$

We will also make use of the real sub-manifold of $X$ whose definition we quickly recall. For more details see \cite{s}.  Guillemin has shown that as complex manifolds, toric manifolds can be viewed as quotients of $\bbC^{d+1}$ by a subgroup of $(\bbC^*)^{d+1}$, where $\bbC^*$ acts on $\bbC$ in the usual way. Complex conjugation on $\bbC^{d+1}$ descends to $X$ and thus $X$ admits an isometric involution. The fixed point set of this involution is called the real sub-manifold of $X$ and we denote it by $X_\bbR$. The moment coordinates $(x_1,x_2)$ yield a coordinate system on the open dense subset of $X_\bbR$ given by $\bx^{-1}(\interior(P))\cap X_\bbR.$ The metric $g_K$ restricts to $X_\bbR$. In the coordinates  $(x_1,x_2)$ it is given by:
$$
g_\bbR=\sum_{i,j=1}^2 u_{ij}dx_i\otimes dx_j.
$$
Alternatively, it will be useful to think of $\bx$ as a Riemannian submersion onto $\interior(P)$ equipped with a metric with the same formula as $g_\bbR$ which we shall still denote by $g_\bbR$. Metric related quantities can be expressed in terms of $u$ and throughout the paper we will use the following formulas:
\begin{itemize}
\item Let $h:X\rightarrow \bbR$ be a smooth, $\bbT^2$-invariant function on $X$. Then $h$ can be thought of as a function on $P.$ In coordinates on the open set  $\bx^{-1}(\interior(P))$, $h$ depends only on $(x_1,x_2)$. The Laplacian of $h$ is given by:
\begin{equation}\label{lapgK}
\Lap h=\sum_{i,j=1}^2\frac{\partial}{\partial x_i}\left( u^{ij}\frac{\partial h}{\partial x_j}\right),
\end{equation}
on $\bx^{-1}(\interior(P)).$
\item Such a smooth $\bbT^2$-invariant function $h:X\rightarrow \bbR$ is pulled back from $P$ and so we also take its Laplacian with respect to the metric $g_\bbR$ on $\interior(P)$ which we denote by $\Lap_\bbR h.$ It is given by
$$
\Lap_\bbR h=\frac{1}{\sqrt{\det(\Hess u)}}\sum_{i,j=1}^2\frac{\partial}{\partial x_i}\left( \sqrt{\det(\Hess u)} u^{ij}\frac{\partial h}{\partial x_j}\right).
$$

\item The scalar curvature of $g_K$ is given by the Abreu formula,
$$
\scal(g_K)=\sum_{i,j=1}^2\frac{\partial^2 u^{ij}}{\partial x_i \partial x_j}.
$$
This can equally be written by making use of $\Lap_\bbR$ as
$$\scal(g_K)= \frac{\Lap_\bbR V}{V},$$
where $V=(\det(\Hess u))^{-\frac{1}{2}}$. See Proposition 3 in \cite{OSD}.
\end{itemize}

\subsection{Extremal toric K\"ahler metrics}
Let $(X^{2n},J,\omega,g_K)$ be a K\"ahler manifold. For a suitable class of functions $\varphi$ on $X,$ $\omega+\sqrt{-1}\partial\bar{\partial} \varphi$ defines another K\"ahler metric compatible with $J.$ In fact all K\"ahler metrics whose cohomology class is $[\omega]$ are of the form $\omega+\sqrt{-1}\partial\bar{\partial} \varphi$ for some $\varphi.$ Calabi defined a functional on the set $\{\varphi\in \mathcal{C}^\infty(X):\omega+\sqrt{-1}\partial\bar{\partial} \varphi \,\text{ is K\"ahler}\}$ by:
$$
\mathcal{C}(\varphi)=\int_X\scal^2(\omega+\sqrt{-1}\partial\bar{\partial} \varphi)(\omega+\sqrt{-1}\partial\bar{\partial} \varphi)^n,
$$
where $\scal(\omega+\sqrt{-1}\partial\bar{\partial} \varphi)$ means the scalar curvature of the Riemannian metric associated with $\omega+\sqrt{-1}\partial\bar{\partial} \varphi.$
A K\"ahler metric in $[\omega]$ is said to be extremal (in the sense of Calabi) if it is a critical point for $\mathcal{C}.$ 

It is well know that a K\"ahler metric $\omega$ is extremal in the sense of Calabi in its cohomology class iff $\nabla^{(1,0)} \scal(\omega)$, the $(1,0)$ part of the gradient of $ \scal(\omega)$, is a holomorphic vector field. A special class of extremal metrics is the class of constant scalar curvature K\"ahler metrics. In the toric setting, it can be shown that $(X^4,\omega,g_K)$ is extremal iff $\scal(g_K)$ is an affine function of the action coordinates $(x_1,x_2).$

We known that not all K\"ahler manifolds admit extremal metrics. For toric manifolds, independently of existence of extremal metrics, it is possible to calculate the only affine function that may be the scalar curvature of an extremal metric in case it exits. This is called the extremal affine function. We explain how to determine it for toric surfaces since it will be relevant for our purposes. Let $X$ be a toric K\"ahler manifold of real dimension $4$ with moment polytope $\Delta.$ As before write 
$$
\Delta=\{(x_1,x_2)\in \bbR^2: l_i(x_1,x_2):=\langle (x_1,x_2),\nu_i\rangle- \lambda_i\geq 0, i=1,\cdots, d\}.
$$
Following \cite{do2}, we are going to define a measure $d\sigma$ on the boundary of $\Delta.$ On the edge {$l_i^{-1}(0)\cap \Delta$, which we denote by $E_i$} and whose interior normal is $\nu_i=(\nu_i^1,\nu_i^2),$ we define $d\sigma$ so that
$$
d\sigma\wedge( \nu_i^1dx_1+\nu_i^2dx_2)=dx_1\wedge dx_2.
$$
The extremal affine function $\alpha_0+\alpha_1x_1+\alpha_2x_2$ is the only affine function on $\Delta$ satisfying
$$
\int_\Delta (\alpha_0+\alpha_1x_1+\alpha_2x_2)h(x_1,x_2)dx_1dx_2=\int_{\partial \Delta}h(x_1,x_2)d\sigma,
$$
for all affine functions $h$ on $\Delta.$ This can be expressed as
$$
\begin{pmatrix}
\int_\Delta 1 &\int_\Delta x_1&\int_\Delta x_2 \\
\int_\Delta x_1 &\int_\Delta (x_1)^2&\int_\Delta x_1x_2 \\
\int_\Delta x_2 &\int_\Delta x_1x_2&\int_\Delta (x_2)^2 \\
\end{pmatrix}
\begin{pmatrix}
\alpha_0\\
\alpha_1\\
\alpha_2
\end{pmatrix}=
\begin{pmatrix}
\int_{\partial \Delta} d\sigma  \\
\int_{\partial \Delta} x_1 d\sigma  \\
\int_{\partial \Delta} x_2 d\sigma   \\
\end{pmatrix}.
$$

We can translate $\Delta$ so that $\alpha_0=0$ in which case we write $\fut=(\alpha_1,\alpha_2)$ and we call $\fut$ the extremal vector.

\begin{rem}
	Let $(M,\omega,J)$ be a compact toric K\"ahler manifold with moment polytope $P$ and $D$ a torus invariant divisor. Let $E$ be the image of $D$ in $P$. Then, in this article we will be interested in using the setup of this section in the case where $X=M \setminus D$ and $\Delta =P\setminus E$. The extremal affine function of $M\setminus D$ is the extremal affine function of $M$ with weight zero along the edge $E.$
\end{rem}

\subsection{Biquard-Gauduchon's construction}

In \cite{bg}, Biquard-Gauduchon construct Hermitian, non-K\"ahler, Ricci-flat ALF toric metrics. Start with $f$ any function of $\bbR$ to $\bbR$ of the form
$$
f(z)=A+\sum_{i=1}^d a_i|z-z_i|,
$$
for some choice of $a_1,\cdots, a_d$ and $z_1<\cdots< z_d,$ and positive $A.$ There is a unique function $U^{BG}$ which is axi-symmetric harmonic in $\bbR^3$ and such that
$$
\lim\limits_{\rho \to 0}\frac{U^{BG}(z,\rho)}{\log (\rho^2)}=f(z).
$$
This can be explicitly written as
\begin{align}\nonumber
U^{BG}(z,\rho)& =2\sum_{i=1}^{d-1} a_i\left(\sqrt{\rho^2+(z-z_i)^2}-(z-z_i)\log\left(\frac{\sqrt{\rho^2+(z-z_i)^2}+z-z_i}{\rho}\right)\right) \\ \label{UBG}
& +A\log(\rho^2).
\end{align}
Here $z$ is the third coordinate in $\bbR^3$ and $\rho$ is the distance to the $z$-axis.
One of the results in \cite{bg} is the following.

\begin{thm}[Biquard-Gauduchon]\label{thmBG2}
Let $f$ be a function given by
$$
f(z)=A+\sum_{i=1}^d a_i|z-z_i|,
$$
for $A>0$ and ${a_1,\cdots,a_d>0}.$ Let $U^{BG}$ be the axi-symmetric harmonic function of $(z,\rho)$ defined on the whole upper half-plane and such that 
$$
\lim\limits_{\rho \to 0}\frac{U^{BG}(z,\rho)}{\log (\rho^2)}=f(z).
$$
{ There is a local K\"ahler toric metric $g^{BG}_K$ with symplectic potential $u^{BG},$ such that:}
$$
\rho U^{BG}_{\rho}=\frac{1}{\scal(g^{BG}_K)}\qquad U^{BG}_z=\det(\triangledown u^{BG}, \fut),
$$
where $\fut$ is the vector whose coefficients are the coefficients of the linear part of $\scal(g^{BG}_K)$. This metric is extremal and Bach-flat. Moreover, $g^{BG}=\frac{g^{BG}_K}{\scal^2(g^{BG}_K)}$ is Ricci-flat and locally toric. 

{There is a toric manifold of the form $M\setminus D,$ for a torus invariant divisor $D,$ such that the metric $g^{BG}_K$ extends smoothly to $M\setminus D$ only when $g^{BG}_K$ coincides with one of metrics in Theorem (\ref{thmBG}).}
\end{thm}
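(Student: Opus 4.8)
The plan is to translate Biquard--Gauduchon's construction into the action-angle language of \cite{s}, by setting up an explicit change of variables between the action coordinates $(x_1,x_2)$ on the interior of the moment polytope and the half-plane coordinates $(z,\rho)$, reading the two displayed identities as its defining relations. Since we seek an \emph{extremal} metric, I would require $\scal(g^{BG}_K)$ to be affine; after translating the polytope so that its constant part vanishes, $\scal(g^{BG}_K)=\langle \fut, x\rangle$ with $\fut$ the extremal vector. Writing $s=\langle \fut,x\rangle$ and $w=\langle \fut^\perp,x\rangle$ for the rotated coordinates ($\fut^\perp$ the $90^\circ$ rotation of $\fut$), a direct computation turns the second identity into $U_z=|\fut|^2\,\partial_w u^{BG}$, while the first reads $\rho U_\rho=1/s$, so that the affine combination $s=\scal(g^{BG}_K)=1/(\rho U_\rho)$ is read off directly from $U^{BG}$.

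The core analytic input is that an axi-symmetric $U^{BG}$ on $\bbR^3$ is harmonic if and only if the $1$-form $\rho\,U_z\,d\rho-\rho\,U_\rho\,dz$ is closed, since its failure to close is exactly $\rho\,\Lap U^{BG}$; harmonicity thus produces a conjugate function $\psi$ with $d\psi=\rho\,U_z\,d\rho-\rho\,U_\rho\,dz$. This $\psi$ supplies the second coordinate needed to invert the change of variables, and integrating the resulting first-order system reconstructs $u^{BG}$, unique up to the affine ambiguity inherent in a symplectic potential. The boundary data $f(z)=A+\sum a_i|z-z_i|$ controls the singular behaviour: the breakpoints $z_i$ and slopes $a_i$ prescribe the positions and interior normals of the edges of the polytope $P\setminus E$, while the normalisation $\lim_{\rho\to 0}U^{BG}/\log(\rho^2)=f$ translates into the Guillemin boundary behaviour, namely that $u^{BG}-\sum_i l_i\log l_i$ be smooth up to $\partial(P\setminus E)$. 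That $\fut$ is then the linear part of $\scal(g^{BG}_K)$ is automatic from $s=\langle\fut,x\rangle$, so $g^{BG}_K$ is extremal; the affineness of $\scal(g^{BG}_K)$ can be double-checked directly from the Abreu formula.

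With the extremal toric Kähler metric in hand, Bach-flatness holds automatically under the ansatz: I would verify that the Bach tensor of $g^{BG}_K$ vanishes exactly when $\Lap U^{BG}=0$, so that it is forced by harmonicity. Derdzinski's package (Theorem \ref{Thmderd1}) then applies: $g^{BG}_K/\scal^2(g^{BG}_K)$ is half-algebraically simple with $\delta W^+=0$ and, being Bach-flat, is Einstein. It is \emph{Ricci-flat} rather than merely Einstein because $U^{BG}$ solves the flat, linear Laplace equation --- the Einstein constant, which I would compute from the ansatz, vanishes precisely in this flat case, whereas a nonzero constant would correspond to a screened, non-flat version of the equation. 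Finally $g^{BG}=g^{BG}_K/\scal^2(g^{BG}_K)$ is locally toric because the entire construction is $\bbT^2$-invariant: $\scal(g^{BG}_K)$ is pulled back from the polytope, so the conformal factor $\scal^{-2}(g^{BG}_K)$ is $\bbT^2$-invariant.

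For the global statement I would translate ``$g^{BG}_K$ extends smoothly to $M\setminus D$'' into conditions on $u^{BG}$: the Guillemin boundary condition along each edge, the Delzant corner condition $\det(\nu_j,\nu_{j+1})=-1$ where consecutive edges meet, and the requirement that the conformal factor $\scal^{-2}(g^{BG}_K)$ --- hence the Ricci-flat metric $g^{BG}$ --- extend smoothly across the corresponding divisors. These constraints restrict both the combinatorics of $P\setminus E$ and the admissible data $(A;a_i,z_i)$ so strongly that only the four polytopes of the figure survive, which are exactly the reversed Taub-NUT, Kerr, Taub-bolt and Chen--Teo cases of Theorem \ref{thmBG}. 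I expect this last rigidity step to be the main obstacle: the delicate part is the corner analysis of $u^{BG}$ at the vertices of $P\setminus E$ --- in particular where $\scal(g^{BG}_K)$ may degenerate --- and showing that smoothness of the conformal factor, on top of the Delzant and extremality conditions, leaves no admissible configurations beyond the listed ones. The coordinate-change/harmonicity equivalence of the first three paragraphs is the conceptual heart, but this extension analysis is where the genuine case-by-case work lies.
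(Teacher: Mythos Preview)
The paper does not prove this theorem at all: immediately after the statement it simply records that ``the above is a consequence of Corollary (3.6), Corollary (5.2) and Lemma (8.1) in \cite{bg}.'' It is quoted as an input from Biquard--Gauduchon, not established here. So there is no ``paper's own proof'' to compare against; your proposal is an attempt to reprove an external result.

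As a sketch of how such a proof could go, your outline has the right architecture --- reconstruct a symplectic potential from $U^{BG}$ via a half-plane/polytope change of variables, identify harmonicity with the relevant curvature condition, then invoke Derdzinski --- and indeed the paper runs the \emph{inverse} of this in Proposition~\ref{U}. But several load-bearing claims are only asserted. First, ``the Bach tensor of $g^{BG}_K$ vanishes exactly when $\Lap U^{BG}=0$'' is the analytic heart and is not obvious; in \cite{bg} this equivalence is the content of their Corollary~3.6/5.2 and requires a genuine computation, not just naming it. Second, your passage from Einstein to \emph{Ricci-flat} is hand-waved: Derdzinski only gives Einstein from Bach-flatness, and the vanishing of the Einstein constant is a separate fact (in the paper's own direction it is \emph{assumed} that $g$ is Ricci-flat, cf.\ the line ``Because $\frac{g_K}{\scal^2(g_K)}$ is Ricci-flat, the constant is zero'' in the proof of Lemma~\ref{lemma_harmonic}); your ``screened Laplace'' heuristic is suggestive but not a proof. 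Third, the global smooth-extension classification --- which you correctly flag as the main obstacle --- is exactly Lemma~8.1 of \cite{bg} and involves nontrivial regularity/integrality constraints that your proposal does not begin to carry out.

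In short: nothing here is wrong in spirit, but what you have is a plan rather than a proof, and the paper's treatment is simply to cite \cite{bg}.
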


{The above is a consequence of Corollary (3.6), Corollary (5.2) and Lemma (8.1) in \cite{bg}. The metrics arising from the above theorem were all known but the theorem gives a new unified construction for all the metrics appearing in Theorem (\ref{thmBG}).}
\section{An extremal Bach-flat toric K\"ahler metric from $g$}
Let $M$ be a toric K\"ahler manifold with moment polytope 
$$
P=\{(x_1,x_2)\in \bbR^2: l_i(x_1,x_2):=\langle (x_1,x_2),\nu_i\rangle- \lambda_i\geq 0, i=1,\cdots, d+1\}.
$$
This is the image of the moment map $\bx:M\rightarrow \bbR^2$ and we shall denote its edges by
$$E_i=l_i^{-1}(0), \ \ i \in \lbrace 1, \ldots , d+1 \rbrace ,$$  
and when no confusion may arise, we shall denote the last of these edges by $E:=E_{d+1}$. Let $D=\bx^{-1}(E)$ be the toric divisor corresponding this last edge of $P.$ Assume that $X=M\setminus D$ is endowed with a conformally K\"ahler, Ricci-flat, toric  metric $g.$ The goal of this section is to prove the following.
\begin{lemma}
Let $g$ be a conformally K\"ahler, Ricci-flat, toric metric on $X.$ Then, there is a K\"ahler toric metric $g_K$ which is extremal and Bach-flat  to which $g$ is conformal.
\end{lemma}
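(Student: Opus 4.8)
The plan is to run Derdzinski's Theorem \ref{Thmderd1} in reverse: the hypothesis already hands us a K\"ahler metric in the conformal class of $g$, and the task is to identify it with the distinguished metric appearing in that theorem, after which extremality and Bach-flatness drop out of its last two bullets. If $g$ is itself K\"ahler then it is Ricci-flat K\"ahler, so $\scal(g)\equiv 0$---hence $g$ is extremal, its scalar curvature being the zero affine function---and, being Einstein, $g$ is Bach-flat; so we may take $g_K=g$. Assume henceforth that $g$ is not K\"ahler and write $g=e^{2\sigma}g_K$ with $g_K$ K\"ahler, compatible with $J$, and $\omega_K$ the associated K\"ahler form.

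The substantive step is to pin down $\sigma$. Since $g$ is Ricci-flat it is Einstein, so $\delta W=0$, and in dimension four this gives $\delta W^{+}=0$ for $g$; by the first bullet of Theorem \ref{Thmderd1}, $g_K/\scal^2(g_K)$ also satisfies $\delta W^{+}=0$ on $\{\scal(g_K)\neq 0\}$. As $g$ is Einstein it is real-analytic, so either $g$ is anti-self-dual---in which case $g_K$ is scalar-flat K\"ahler, hence extremal, and one checks Bach-flatness directly---or $\{W^{+}\neq 0\}$ is open and dense; in the latter case, since for a K\"ahler surface $W^{+}$ has eigenvalues $\tfrac{\scal}{6},-\tfrac{\scal}{12},-\tfrac{\scal}{12}$, this dense set coincides with $U:=\{\scal(g_K)\neq 0\}$. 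Applying the uniqueness clause (second bullet of Theorem \ref{Thmderd1}) to $g=e^{2\sigma}g_K$ produces a constant $c>0$ with $e^{-2\sigma}=c\,\scal^2(g_K)$ on $U$, hence on all of $X$ by continuity; after the homothety $g_K\mapsto c^{-1/3}g_K$ (and using $\scal(\lambda g_K)=\lambda^{-1}\scal(g_K)$) we may assume $g=g_K/\scal^2(g_K)$ on $X$. Then $g_K/\scal^2(g_K)=g$ is Einstein, so by the third bullet its Bach tensor vanishes on $U$; the Bach tensor being a pointwise conformal invariant in dimension four, $g_K$ is Bach-flat on $U$, hence on $X$ by continuity, while the fourth bullet shows that $\nabla\scal(g_K)$ is holomorphic, i.e.\ that $g_K$ is extremal in the sense of Calabi.

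Finally I would check that $g_K$ is $\bbT^2$-invariant, so that it is genuinely toric. The torus acts on $X=M\setminus D$ by $g$-isometries (as $g$ is toric) and biholomorphically for $J$, so for $\gamma\in\bbT^2$ the metric $\gamma^{*}g_K$ is again K\"ahler for $J$ and conformal to $\gamma^{*}g=g$, hence conformal to $g_K$; but two K\"ahler metrics on a complex surface compatible with the same complex structure and conformal to one another are homothetic---closedness of both K\"ahler forms forces the conformal factor to be constant---so $\gamma^{*}g_K=\rho(\gamma)\,g_K$ with $\rho\colon\bbT^2\to\bbR^{>0}$ a continuous homomorphism, whence $\rho\equiv 1$ by compactness of $\bbT^2$. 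The step I expect to be most delicate is upgrading Derdzinski's restriction to the locus $\{W^{+}\neq 0\}$ to the global identity $e^{-2\sigma}=c\,\scal^2(g_K)$: this is where real-analyticity of the Einstein metric is used and where the anti-self-dual case must be split off and handled separately.
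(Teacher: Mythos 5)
Your argument is correct, but it reaches $g_K$ by a different route than the paper. The paper applies Derdzinski's Theorem (\ref{Thmderd2}) to $g$ itself: Goldberg--Sachs gives half-algebraic simplicity, the simple eigenvalue $\lambda$ of $W^+(g)$ produces $g_K=\lambda^{2/3}g$, and the identification $\lambda^{1/3}=\scal(g_K)$ then follows from the second bullet of Theorem (\ref{Thmderd1}); torus invariance of $g_K$ is automatic because $\lambda$ and its eigenform are canonically attached to the $\bbT^2$-invariant tensor $W^+(g)$. You instead take the Kähler representative handed to you by the conformally Kähler hypothesis and pin down the conformal factor via the uniqueness clause (second bullet), which forces you to (i) invoke real-analyticity of Einstein metrics to split off the anti-self-dual case and to see that $\{W^+\neq 0\}=\{\scal(g_K)\neq 0\}$ is dense otherwise, and (ii) prove $\bbT^2$-invariance separately, which you do cleanly via the observation that two conformal Kähler metrics for the same $J$ on a connected surface are homothetic, so the conformal factors $\rho(\gamma)$ form a continuous homomorphism $\bbT^2\to\bbR^{>0}$ and hence are trivial. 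What your route buys is that it avoids Theorem (\ref{Thmderd2}) and the Goldberg--Sachs input entirely, and it is more careful than the paper on one point: the paper's proof tacitly assumes $W^+\not\equiv 0$ (it needs a nonzero simple eigenvalue $\lambda$ to form $\lambda^{2/3}g$ and to divide by $\scal(g_K)=\lambda^{1/3}$), whereas you handle the anti-self-dual degenerate case explicitly (scalar-flat Kähler is extremal, and $W^+=0$ kills the Bach tensor). What the paper's route buys is brevity and a canonical, manifestly invariant construction of $g_K$. Both arguments land on the same normalisation $g=g_K/\scal^2(g_K)$ and extract extremality and Bach-flatness from the last two bullets of Theorem (\ref{Thmderd1}).
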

The proof essentially follows from Theorem (\ref{Thmderd1}) and Theorem (\ref{Thmderd2}).
\begin{proof} The first thing to note is that since $g$ is Ricci-flat it is Einstein and therefore it satisfies $\delta W^+=0.$ This is because
$$
\nabla^p W_{pkij}=\nabla_i\mP_{kj}-\nabla_j\mP_{ki},
$$
with $\mP=\frac{1}{2}\left(\text{Ric}-\frac{1}{6}\scal\right)$ being the Einstein tensor. In the Einstein case this is zero so that from the formula above $\delta W=0$. But 
$$
|\delta W|^2=|\delta W^+|^2+|\delta W^-|^2
$$
and so $\delta W^+=0.$ Our metric $g$ on $X=M\setminus D$ is Ricci-flat and Hermitian and by the Riemannian Golberg--Sachs theorem \cite{AG,l0} it is half-algebraically simple. We can thus apply Theorem (\ref{Thmderd2}) to $g$. Let $\lambda$ be the simple eigenvalue of $W^+(g)$ and $\omega$ the corresponding eigenform. Because $g$ is $\bbT^2$-invariant, so is $W^+$ and therefore $\lambda$ and $\omega$ are $\bbT^2$-invariant as well. The K\"ahler metric $g_K=\lambda^{2/3}g$ is therefore $\bbT^2$-invariant. We have $g=\lambda^{-2/3}g_K$ and therefore, combining the second bullet in Theorem (\ref{Thmderd1}) $\lambda^{1/3}=\scal(g_K)$ and
$$
g=\frac{g_K}{\scal^2(g_K)}.
$$
for a toric K\"ahler metric $g_K.$ This metric must be extremal and Bach-flat from the third and fourth bullets in Theorem (\ref{Thmderd1}).
\end{proof}

\section{Action-angle coordinate setup}
From the previous section there is a K\"ahler toric metric $g_K$ which is extremal and Bach-flat  to which the Ricci-flat metric $g$ is conformal. We can apply the toric K\"ahler framework to the metric $g_K.$ There is an open dense set in $X=M\setminus D$ and action-angle coordinates $(x_1,x_2,\theta_1,\theta_2)$ on that set and a symplectic potential $u$ for $g_K$ such that 
$$
g_K=\sum_{i,j=1}^2 \left( u_{ij}dx_i\otimes dx_j+u^{ij}d\theta_i\otimes d\theta_j \right),
$$
and recall that we have an induced metric on the polytope which we denoted by $g_\bbR$. Because $g_K$ is extremal, $\scal(g_K)$ is an affine function of $(x_1,x_2).$ Without loss of generality we assume it vanishes at zero so that
$$
\scal(g_K)=\alpha_1x_1+\alpha_2x_2
$$
with $\fut=(\alpha_1,\alpha_2)$ being the extremal vector field. 
Let 
$$
\rho(x_1,x_2)=\frac{(\det(\Hess{u}))^{-1/2}}{\scal^2(g_K)}.
$$
Then, both $\rho$ and $\scal(g_K)$ are $\bbT^2$-invariant functions, which are therefore pulled back from $P$, and we can consider taking their Laplacian with respect to the metric $g_\bbR$ on $\interior(P)$. This leads to the following lemma.
\begin{lemma}\label{lemma_harmonic}
The function $\rho$ is harmonic for $g_\bbR$.
\end{lemma}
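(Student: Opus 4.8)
The strategy is to express the condition $\Lap_\bbR \rho = 0$ in terms of known scalar-curvature identities, so that it reduces to an algebraic consequence of $\scal(g_K)$ being affine. Recall from the background that $V = (\det \Hess u)^{-1/2}$ satisfies the Abreu-type identity $\scal(g_K) = \Lap_\bbR V / V$, and that for a $\bbT^2$-invariant function $h$ one has $\Lap_\bbR h = V^{-1}\Lap h$, where $\Lap h = \sum_{i,j}\partial_{x_i}(u^{ij}\partial_{x_j}h)$ is the Laplacian for $g_K$. Since $\rho = V/\scal^2(g_K)$, the plan is first to rewrite the claim as $\Lap\bigl(V\scal^{-2}(g_K)\bigr) = 0$ and then expand using the Leibniz rule for the divergence-form operator $\Lap$, keeping in mind that $\Lap V = V\,\scal(g_K)$ (this is the Abreu relation after multiplying through by $V$).

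Concretely, writing $s := \scal(g_K) = \alpha_1 x_1 + \alpha_2 x_2$, I would compute
\begin{equation*}
\Lap(Vs^{-2}) = s^{-2}\Lap V + 2\,\langle u^{-1}\nabla V, \nabla(s^{-2})\rangle + V\,\Lap(s^{-2}),
\end{equation*}
where $\langle\cdot,\cdot\rangle$ and $\nabla$ refer to the bilinear form $u^{ij}$ and the Euclidean gradient in $(x_1,x_2)$. The first term is $s^{-2}\cdot Vs = Vs^{-1}$. For the last two terms, since $s$ is affine, $\nabla(s^{-2}) = -2s^{-3}\nabla s$ with $\nabla s = \fut$ constant, and $\Lap(s^{-2}) = -2\,\Lap(s^{-1})\cdot(\text{chain rule})$; more efficiently, $\Lap(s^{-2}) = \partial_{x_i}(u^{ij}\partial_{x_j}s^{-2}) = -2\,\partial_{x_i}(s^{-3}u^{ij}\alpha_j) = 6 s^{-4}\langle\fut,\fut\rangle_{u} - 2 s^{-3}\partial_{x_i}(u^{ij}\alpha_j)$. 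The term $\partial_{x_i}(u^{ij}\alpha_j) = \Lap s$, which vanishes because $s$ is affine (equivalently, the Abreu formula says $\sum\partial^2 u^{ij}/\partial x_i\partial x_j = s$ is affine, but $\Lap s$ here means $\sum_{i,j}\partial_{x_i}(u^{ij}\partial_{x_j}s)$; one must check this is zero — it is, since $\partial_{x_j}s = \alpha_j$ is constant so $\Lap s = \sum_{i}\partial_{x_i}(\sum_j u^{ij}\alpha_j)$, and $\sum_{i,j}\partial_{x_i}u^{ij}\alpha_j = \langle \fut, \nabla\rangle$ applied to $\sum_i\partial_{x_i}u^{ij}$; this needs the identity $\sum_i \partial_{x_i} u^{ij} = -\sum_i u^{ij}\partial_{x_i}\log V$, i.e. the first-order Abreu relation, to close). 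Collecting: the middle term gives $2\langle u^{-1}\nabla V, -2s^{-3}\fut\rangle = -4 s^{-3}\langle \nabla V, \fut\rangle_u$, and one checks that $\langle \nabla V,\fut\rangle_u = V\,\langle \nabla \log V, \fut\rangle_u$ combines with the $6s^{-4}\langle\fut,\fut\rangle_u V$ term and the $Vs^{-1}$ term to cancel identically, using that $s = \langle \fut, \bx\rangle$ so that $\langle\nabla s, \nabla s\rangle_u = \langle\fut,\fut\rangle_u$ relates to $s$ via the first-order relation between $u^{ij}$ and $V$.

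The main obstacle is bookkeeping the first-order Abreu identity correctly — namely the relation $\sum_i \partial_{x_i} u^{ij} = u^{ij}\partial_{x_i}\log(\det\Hess u)^{1/2} \cdot(\pm 1)$, equivalently $\Lap(f) = \Lap_\bbR(f) \cdot V$ combined with $\Lap_\bbR V = sV$ — and making sure all the $s$-powers and contractions with $\fut$ cancel. Rather than the brute-force expansion above, the cleaner route (which I would actually write up) is: use $\Lap_\bbR(\cdot) = V^{-1}\Lap(\cdot)$ to convert to the $g_\bbR$-Laplacian throughout, invoke $\Lap_\bbR V = sV$ directly, and then the computation is just the scalar identity $\Lap_\bbR(V s^{-2})$ expanded by Leibniz for $\Lap_\bbR$ (a genuine Laplace–Beltrami operator, so the product rule is the standard Riemannian one), where the cross term involves $g_\bbR(\nabla V, \nabla s)$ and one uses that $s = \langle\fut,\bx\rangle$ together with $\Lap_\bbR s$ computed via Abreu. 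The key cancellation is that $\Lap_\bbR s = \scal(g_K)$-type terms conspire so that $\Lap_\bbR(Vs^{-2})$ telescopes to zero; I expect this to be a three-line computation once the correct form of $\Lap_\bbR s$ is in hand, and verifying that form — i.e. that $s$, though not $g_\bbR$-harmonic, has $\Lap_\bbR s$ expressible purely in terms of $s$ and $|\nabla s|^2_{g_\bbR}$ — is where the real content lies and where I would focus the written proof.
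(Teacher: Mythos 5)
Your plan has a genuine gap: it never uses the Bach-flatness of $g_K$ (equivalently, the Ricci-flatness of $g=g_K/\scal^2(g_K)$), and the cancellation you are counting on does not happen without it. Writing $s=\scal(g_K)$ and carrying out your Leibniz expansion with only the two inputs you list ($s$ affine and the Abreu relation $\Lap_\bbR V=sV$) leaves
$$
\Lap_\bbR(Vs^{-2})=\frac{V}{s}-\frac{4}{s^{3}}\sum_{i,j}u^{ij}\,\partial_{x_i}V\,\alpha_j-\frac{2V}{s^{3}}\Lap_\bbR s+\frac{6V}{s^{4}}\sum_{i,j}u^{ij}\alpha_i\alpha_j .
$$
The last three terms involve first derivatives of $u^{ij}$ contracted against $\fut$ in two genuinely different ways ($\sum_i\partial_{x_i}u^{ij}\,\alpha_j$ coming from $\Lap_\bbR s$, versus $\sum_{i,j}u^{ij}\partial_{x_i}(\log V)\,\alpha_j$ coming from the cross term), and neither extremality nor Abreu's formula forces their combination to equal $-V/s$. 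The paper's proof reaches exactly this impasse --- its Equation (\ref{lap=0}) --- and closes it by invoking Bach-flatness in the form $\scal\left(g_K/\scal^2(g_K)\right)=0$, which via the conformal transformation law gives the extra identity $\Lap_{g_K}\left(1/\scal(g_K)\right)=1/6$; it is this identity that kills the leftover terms. (That it is genuinely needed can be seen from the fact that $\rho=V/s^2$ is not $g_\bbR$-harmonic for a general extremal toric metric.) So the ``real content'' is not the form of $\Lap_\bbR s$, as you suggest, but the Ricci-flat/Bach-flat hypothesis, which your write-up never touches.

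A secondary but real error: you recall $\Lap_\bbR h=V^{-1}\Lap h$. Since $\sqrt{\det\Hess u}=V^{-1}$, the correct relation is $\Lap_\bbR h=\Lap h-\sum_{i,j}u^{ij}\partial_{x_i}(\log V)\,\partial_{x_j}h$: the two operators differ by a first-order drift term, not by a conformal factor. Consequently your recalled normalisations ($\Lap V=V\scal(g_K)$ together with $\Lap_\bbR=V^{-1}\Lap$ and $\scal(g_K)=\Lap_\bbR V/V$) are mutually inconsistent, and the bookkeeping would go wrong even after the Bach-flat input is added.
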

\begin{proof}
This proof is a really a calculation.
$$
\Lap_\bbR \rho=\frac{1}{\sqrt{\det(\Hess u)}} \sum_{i,j=1}^2\frac{\partial}{\partial x_i}\left( \sqrt{\det(\Hess u)}u^{ij}\frac{\partial \rho}{\partial x_j}\right),
$$
so that $\Lap_\bbR \rho=0$ iff
$$
\sum_{i,j=1}^2\frac{\partial}{\partial x_i}\left( \sqrt{\det(\Hess u)}u^{ij}\frac{\partial \rho}{\partial x_j}\right)=0.
$$
Now $-\sum_{i,j=1}^2\frac{\partial}{\partial x_i}\left( \sqrt{\det(\Hess u)}u^{ij}\frac{\partial \rho}{\partial x_j}\right)$ is given by
\begin{IEEEeqnarray*}{rCl}
& &\sum_{i,j=1}^2\frac{\partial}{\partial x_i}\Biggl( \sqrt{\det(\Hess u)}u^{ij}\left( \frac{(\det(\Hess{u}))^{-3/2}}{2\scal^2 (g_K)}\frac{\partial \det(\Hess u)}{\partial x_j}+ \right.\\ \nonumber
&&\left. \frac{2(\det(\Hess{u}))^{-1/2}}{\scal^3 (g_K)}\frac{\partial \scal (g_K)}{\partial x_j}\right)\Biggr)\\ 
& =&\sum_{i,j=1}^2\frac{\partial}{\partial x_i}\left( u^{ij}\left( \frac{1}{2\scal^2 (g_K)}\frac{\partial \log(\det(\Hess u))}{\partial x_j} +\frac{2}{\scal^3 (g_K)}\frac{\partial \scal (g_K)}{\partial x_j}\right)\right)\\  \nonumber
&=&\sum_{i,j=1}^2\frac{\partial}{\partial x_i}\left( u^{ij}\left( \frac{1}{2\scal^2 (g_K)}\sum_{a,b=1}^2u^{ab}u_{abj} +\frac{2}{\scal^3 (g_K)}\frac{\partial \scal (g_K)}{\partial x_j}\right)\right),\\ \nonumber
\end{IEEEeqnarray*}
where
$$
u_{ijk}=\frac{\partial^3 u}{\partial x_i\partial x_j\partial x_k} , \qquad \forall i,j,k=1,2.
$$
This is because for any smooth family of invertible matrices $A(t)$ we have
$$
\frac{d\log\det A(t)}{dt}=\tr\left(A^{-1}(t)\frac{d A(t)}{dt}\right).
$$
We also have 
$$
\frac{d A(t)^{-1}}{dt}=-A\frac{d A(t)}{dt}A^{-1},
$$
hence 
$$
\sum_{a,b,j=1}^2u^{ij}u^{ab}u_{abj}=-\sum_{k=1}^2u^{ik}_k
$$
where 
$$
u^{ij}_k=\frac{\partial u^{ij}}{\partial x_l}, \qquad \forall i,j,k=1,2.
$$
So $\Lap \rho=0$ iff
$$
\sum_{i,j=1}^2\frac{\partial}{\partial x_i}\left(-\frac{\sum_{k=1}^2 u^{ik}_k}{2\scal^2(g_K)} +\frac{2u^{ij}}{\scal^3(g_K)}\frac{\partial \scal (g_K)}{\partial x_j}\right)=0.
$$
This is equivalent to
\begin{IEEEeqnarray*}{rCl}
&&-\frac{\sum_{i,k=1}^2 u^{ik}_{ik}}{2\scal^2(g_K)} +\frac{\sum_{i,k=1}^2 u^{ik}_k}{\scal^3(g_K)}\frac{\partial \scal (g_K)}{\partial x_i}+
\sum_{j=1}^2 \frac{\sum_{i=1}^2 2u^{ij}_i}{\scal^3(g_K)}\frac{\partial \scal (g_K)}{\partial x_j}\\
&&+\sum_{i,j=1}^2 \frac{2u^{ij}}{\scal^3 (g_K)}\frac{\partial^2 \scal (g_K)}{\partial x_i\partial x_j}-6\sum_{i,j=1}^2 \frac{2u^{ij}}{\scal^4 (g_K)}\frac{\partial \scal (g_K)}{\partial x_i}\frac{\partial \scal (g_K)}{\partial x_j}=0. \\
\end{IEEEeqnarray*}

Because our metrics are extremal 
$$
\frac{\partial^2 \scal (g_K)}{\partial x_i\partial x_j}=0, \qquad \forall i,j=1,2.
$$
From Abreu's equation 
$$
\sum_{i,k=1}^2 u^{ik}_{ik}=-\scal (g_K).
$$
The above simplifies to yield 
\begin{equation}\label{lap=0}
\frac{1}{2\scal(g_K)} +\frac{3\sum_{i,k=1}^2 u^{ik}_k}{\scal^3 (g_K)}\frac{\partial \scal (g_K)}{\partial x_i}-6\sum_{i,j=1}^2 \frac{2u^{ij}}{\scal^4 (g_K)}\frac{\partial \scal (g_K)}{\partial x_i}\frac{\partial \scal (g_K)}{\partial x_j}=0. \\
\end{equation}
Next we make use of the Bach-flatness condition. It is known that Bach-flatness in our setting is equivalent to
$$
\scal \left(\frac{g_K}{\scal^2 (g_K)}\right)
$$
being constant. Because $\frac{g_K}{\scal^2 (g_K)}$ is Ricci-flat, the constant is zero therefore Bach-flatness is equivalent to
$$
\scal \left(\frac{g_K}{\scal^2 (g_K)}\right)=0
$$
{Now, using the notation $\Lap_{g_K}$ to denote the Laplacian on $X$ with respect to $g_K$ to avoid confusion, we have}
$$
\scal \left(\frac{g_K}{\scal^2 (g_K)}\right)=(\scal (g_K))^3\left(1-6\Lap_{g_K}\left(\frac{1}{\scal (g_K)}\right)\right),
$$
and we get
$$
\Lap_{g_K}\left(\frac{1}{\scal (g_K)}\right)=\frac{1}{6}.
$$
Let us make this explicit by using Equation (\ref{lapgK}).
\begin{IEEEeqnarray*}{rCl}
\frac{1}{6}&=&\sum_{i,j=1}^2\frac{\partial}{\partial x_i}\left(u^{ij}\frac{\partial }{\partial x_i}\left(\frac{1}{\scal (g_K)}\right)\right)\\ \nonumber
&=&-\sum_{i,j=1}^2\frac{\partial}{\partial x_i}\left(\frac{u^{ij}}{\scal^2 (g_K)}\frac{\partial \scal (g_K) }{\partial x_i}\right)\\ \nonumber
&=&-\sum_{j=1}^2\frac{\sum_{i=1}^2u^{ij}_i}{\scal^2 (g_K)}\frac{\partial \scal (g_K) }{\partial x_i} \\
& & -\sum_{i,j=1}^2\frac{u^{ij}}{\scal^2 (g_K)}\frac{\partial^2 \scal (g_K) }{\partial x_i\partial x_j}+\sum_{i,j=1}^2\frac{2u^{ij}}{\scal^3 (g_K)}\frac{\partial \scal (g_K) }{\partial x_i}\frac{\partial \scal (g_K) }{\partial x_j}\\ \nonumber
&=&-\sum_{j=1}^2\frac{\sum_{i=1}^2u^{ij}_i}{\scal^2 (g_K)}\frac{\partial \scal (g_K) }{\partial x_i}+\sum_{i,j=1}^2\frac{2u^{ij}}{\scal^3 (g_K)}\frac{\partial \scal (g_K) }{\partial x_i}\frac{\partial \scal (g_K) }{\partial x_j}\\ \nonumber
\end{IEEEeqnarray*}
for extremal metrics. Replacing in Equation (\ref{lap=0}) we get
$$
\begin{aligned}
&\frac{1}{2\scal(g_K)} +\frac{3\sum_{i,k=1}^2 u^{ik}_k}{\scal^3 (g_K)}\frac{\partial \scal (g_K)}{\partial x_i}-6\sum_{i,j=1}^2 \frac{2u^{ij}}{\scal^4 (g_K)}\frac{\partial \scal (g_K)}{\partial x_i}\frac{\partial \scal (g_K)}{\partial x_j}\\
&=\frac{1}{2\scal(g_K)} -\frac{1}{2\scal(g_K)} \\
&=0,
\end{aligned}
$$
and we are done.
\end{proof}
Because $\rho$ is harmonic for $g_\bbR$ it admits a harmonic conjugate which we denote by $z$ so that
$$
\sum_{i,j=1}^2u_{ij}dx_i\otimes dx_j =e^{2v}\left(d\rho^2+dz^2\right).
$$
Next, we state an important but rather technical lemma. The proof goes exactly as the proof of Proposition 5.1 in \cite{s} so we do not go over it here.
\begin{lemma} \label{lemma_bij} 
Let $M$ be a toric K\"ahler surface with moment polytope $P$ and $D$ be a torus invariant divisor in $X$ whose image via the moment map is $E.$ Let $g$ be a conformally K\"ahler toric metric conformal to an extremal Bach-flat toric K\"ahler metric $g_K$ with symplectic potential $u.$ Consider also the map $\bz=z+\sqrt{-1}\rho: P\setminus E\rightarrow \bbH$ where 
$$
\rho=\frac{(\det \Hess u)^{-1/2}}{\scal(g_K)} 
$$ 
and $z$ is the harmonic conjugate of $\rho$ for $g_K$ restricted to the real manifold. Then $\bz$ is a bijection. 
\end{lemma}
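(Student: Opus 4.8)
The plan is to prove that $\bz=z+\sqrt{-1}\rho$ is a proper local diffeomorphism of $\interior(P)$ onto the open upper half-plane and then to finish by a covering-space argument, treating the boundary separately; this is the structure of the proof of Proposition 5.1 of \cite{s}, so I only sketch it. First, since $\interior(P)$ is convex, hence simply connected, the harmonic function $\rho$ admits a globally defined harmonic conjugate $z$ (unique up to an additive constant), and by the very choice of $z$ the pair $(\rho,z)$ furnishes isothermal coordinates for $g_\bbR$ on $\interior(P)$: indeed $\sum_{i,j=1}^2 u_{ij}\,dx_i\otimes dx_j=e^{2v}(d\rho^2+dz^2)$. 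Thus $\bz$ is a conformal, in particular local, diffeomorphism of $\interior(P)$ into $\bbC$, and it maps into the open upper half-plane as soon as $\rho>0$ there. Since $\scal(g_K)=\alpha_1x_1+\alpha_2x_2$ is affine and $g_K$ is a genuine K\"ahler metric on $\bx^{-1}(\interior(P))$, $\scal(g_K)$ cannot vanish in $\interior(P)$; being affine it has constant sign, say positive, so $\rho=(\det\Hess u)^{-1/2}/\scal(g_K)>0$ on $\interior(P)$ and $\bz(\interior(P))\subset\bbH$.

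The heart of the matter is the boundary analysis. Writing $\det(\Hess u)=\delta/\prod_{j=1}^{d+1}l_j$ with $\delta$ smooth and positive up to the edges $E_1,\dots,E_d$, we get $(\det\Hess u)^{-1/2}=\delta^{-1/2}\prod_j l_j^{1/2}$, which vanishes to order $1/2$ along each edge. Along a compact edge $E_i$ with $i\le d$ the scalar curvature stays positive, so $\rho\to 0$ there; as one approaches the removed edge $E=E_{d+1}$ one shows, exactly as in \cite{s} (using that the moment polytope relevant to $g_K$ is $P\setminus E$, which forces the affine function $\scal(g_K)$ to vanish along the line supporting $E$), that $\scal(g_K)$ vanishes there to first order and hence $\rho\to+\infty$. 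Moreover $\rho$ is harmonic, vanishes on $E_i$ and is positive inside, so Hopf's lemma gives a strictly positive inward normal derivative of $\rho$ along $\interior(E_i)$; the conjugacy relation then makes $z$ strictly monotone along $\interior(E_i)$, so $\bz$ maps $\interior(E_i)$ homeomorphically onto an open sub-interval of the $z$-axis, the vertex between $E_k$ and $E_{k+1}$ going to the shared endpoint, and — again as in \cite{s} — the two vertices of $P$ lying on $E$ being sent to $-\infty$ and $+\infty$. Hence $\bz$ carries the retained boundary $\partial P\setminus E$ homeomorphically onto $\partial\bbH$.

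Now the pieces assemble into the bijectivity statement. For properness of $\bz:\interior(P)\to\bbH$: if $(p_n)\subset\interior(P)$ has $\bz(p_n)$ in a compact subset $K\subset\bbH$, then, $P$ being compact, a subsequence converges to some $p_\ast\in P$; $p_\ast$ cannot lie on any $E_i$ with $i\le d$ (that would force $\rho(p_n)\to 0$) nor on $E$ (that would force $\rho(p_n)\to+\infty$), so $p_\ast\in\interior(P)$, proving properness. A proper local diffeomorphism onto the connected manifold $\bbH$ is a finite covering map whose image is open (local homeomorphism) and closed (proper maps to locally compact Hausdorff spaces are closed), hence all of $\bbH$; since $\bbH$ is simply connected the covering has one sheet, so $\bz$ is a diffeomorphism $\interior(P)\to\bbH$. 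Combined with the boundary homeomorphism of the previous paragraph this yields the claimed bijection $P\setminus E\to\bbH$.

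The step I expect to be the main obstacle is precisely the boundary analysis of the second paragraph: establishing that $\rho\to 0$ along the edges $E_1,\dots,E_d$ corresponding to divisors of $X$ while $\rho\to+\infty$ at the removed edge $E$ (this is what makes the image all of $\bbH$ rather than a bounded region), and controlling the non-local quantity $z$ near the two vertices on $E$ so as to see it escape to $\pm\infty$. All of this is carried out as in the proof of Proposition 5.1 of \cite{s}, which is why we do not reproduce it here.
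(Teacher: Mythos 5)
The paper offers no written proof of this lemma; it simply states that the argument is identical to that of Proposition 5.1 in \cite{s}, and your overall architecture (isothermal coordinates from the harmonic conjugate, properness, a covering/degree argument over the simply connected target $\bbH$, plus a separate boundary analysis) is indeed the structure of that argument. However, the one step you do try to justify explicitly --- the behaviour of $\rho$ at the removed edge $E$ --- is not correct as stated. You claim that the affine function $\scal(g_K)$ must vanish along the line supporting $E$, so that $\rho\to+\infty$ there. By the paper's own subsequent lemma on the vanishing locus of $\scal(g_K)$, this happens only when $\det(\fut,\nu_1)$ and $\det(\fut,\nu_d)$ are both nonzero; in the remaining cases $\scal(g_K)$ vanishes at a single vertex of $E$ or nowhere on $\overline{P}$, and your mechanism for $\rho\to+\infty$ at $E$ breaks down. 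Relatedly, the expansion $\det(\Hess u)=\delta/\prod_{j=1}^{d+1}l_j$ with $\delta$ positive up to the boundary is the Guillemin-type statement for a metric smooth on all of $M$; for $g_K$, which lives on $X=M\setminus D$, the product runs only over $j=1,\dots,d$ (as the paper itself writes in the proof of Proposition \ref{U}) and the behaviour of $\det(\Hess u)$ near $E$ is not prescribed a priori. So the divergence of $\bz$ at the end of $X$ has to be extracted from completeness/properness considerations as in \cite{s} (where the polytope is genuinely unbounded), not from an explicit boundary expansion at $E$.

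Two smaller points. First, the non-vanishing of $\scal(g_K)$ on $\interior(P)$ does not follow from $g_K$ being ``a genuine K\"ahler metric'' (K\"ahler metrics can have vanishing scalar curvature at points); it follows from the fact that $g=g_K/\scal^2(g_K)$ is a smooth Riemannian metric on $X$. Second, $\bz$ is holomorphic for the conformal structure of $g_\bbR$, so a priori it may have isolated critical points where it is not a local diffeomorphism; your covering-space step should be phrased via the degree of a proper holomorphic map (a proper nonconstant holomorphic map between Riemann surfaces is a branched covering of well-defined degree, and degree one forces injectivity and excludes branch points), or else you must first rule out critical points of $\rho$.
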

\section{The function $U$}
As we have seen in Theorem (\ref{thmBG2}),  Biquard and Gauduchon make use of an axi-symmetric harmonic function on $\bbR^3$ in their construction for conformally K\"ahler, Ricci-flat metrics. This function, which we have denoted by $U^{BG},$ is given explicitly in their work by Equation (\ref{UBG}). In this section, our aim is to see that conversely, an axi-symmetric harmonic function $U$ can be associated to any conformally K\"ahler, Ricci-flat, toric metric on $M\setminus D.$  What we do here is very similar to what Donaldson does in \cite{donjoyce} to show that every toric scalar-flat K\"ahler metric arises from Joyce's ansatz. The metric $g_K$ and its symplectic potential have specified behaviour at $\partial P\setminus E$ and this will imply a specific boundary behaviour for $U$ on $\partial \bbH$.

Consider the map $\bz=z+\sqrt{-1}\rho$ in Lemma (\ref{lemma_bij}). Even though the functions ${\scal(g_K)}$ and $\det(\triangledown u, \fut)$ are functions on $P\setminus E,$ we interpret them as functions on $\bbH$ via Lemma (\ref{lemma_bij}) i.e. 
$$
\begin{aligned}
\scal(g_K)(z+\sqrt{-1}\rho)&=\scal(g_K)\circ \bz^{-1}(z+\sqrt{-1}\rho),\\ \nonumber
 \det(\triangledown u, \fut) (z+\sqrt{-1}\rho)&=\det(\triangledown u, \fut)\circ \bz^{-1} (z+\sqrt{-1}\rho)\\ \nonumber
 \end{aligned}
$$
We shall use Lemma (\ref{lemma_bij}) to show the following proposition. 
\begin{prop}\label{U}
Let $M$ be a toric K\"ahler surface with moment polytope $P$ and $D$ be a torus invariant divisor in $X$ whose image via the moment map is an edge $E.$ Let $g$ be a Ricci-flat conformally K\"ahler, toric metric conformal to an extremal Bach-flat toric K\"ahler metric $g_K$ with symplectic potential $u.$ Let $P$ be normalised so that $\scal(g_K)=\alpha_1x_1+\alpha_2x_2$ for constants $\alpha_1,\alpha_2$. Set $\fut=(\alpha_1,\alpha_2).$  Then, there is a function $U:\bbH \rightarrow \bbR$ such that 
$$
\rho U_\rho=\frac{1}{\scal(g_K)(z+\sqrt{-1}\rho)},\qquad U_z=\det(\triangledown u, \fut) (z+\sqrt{-1}\rho).
$$
The function $U$ is axi-symmetric harmonic i.e. it satisfies
\begin{equation}\label{Uharmonic}
U_{\rho\rho}+U_{zz}+\frac{U_\rho}{\rho}=0.
\end{equation}
The function $\frac{U}{\log(\rho^2)}$ extends to $\partial \bbH$ and it is affine with slope $\det(\fut,\nu_i)$ when restricted to each segment of $\partial \bbH$ corresponding to the image via $\bz$ of the $i$th facet of $P\setminus E,$ for $i=1,\cdots,d.$
\end{prop}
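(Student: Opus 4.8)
\emph{Strategy.} Following Donaldson's argument in \cite{donjoyce} and its adaptation in \cite{s}, I would transport everything to $\bbH$ via the bijection $\bz$ of Lemma (\ref{lemma_bij}), produce $U$ as a potential for a suitable closed $1$-form, and then read off its boundary behaviour from the known boundary asymptotics of the symplectic potential $u$. Concretely, on $\bbH$ with coordinates $(z,\rho)$ consider the $1$-form
$$
\beta \ :=\ \det(\triangledown u,\fut)\,dz \ +\ (\det\Hess u)^{1/2}\,d\rho ,
$$
where the coefficients are viewed as functions on $\bbH$ through $\bz^{-1}$, and note that by the very definition of $\rho$ one has $(\det\Hess u)^{1/2}=\bigl(\rho\,\scal(g_K)\bigr)^{-1}$. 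Since $\bbH$ is simply connected it is enough to check that $\beta$ is closed; then $\beta=dU$ for some $U$, and $\rho U_\rho=1/\scal(g_K)$, $U_z=\det(\triangledown u,\fut)$ hold by construction.

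\emph{The two identities.} Because $\rho$ is a coordinate on $\bbH$ we have $\partial_z\rho=0$, so closedness of $\beta$ reduces to the single identity
$$
\partial_z\!\left(\frac{1}{\scal(g_K)}\right)=\rho\,\partial_\rho\bigl(\det(\triangledown u,\fut)\bigr),
$$
while the axi-symmetric harmonicity (\ref{Uharmonic}) of $U$, after multiplying by $\rho$, is equivalent to
$$
\partial_\rho\!\left(\frac{1}{\scal(g_K)}\right)+\rho\,\partial_z\bigl(\det(\triangledown u,\fut)\bigr)=0 ;
$$
together these say that $\bigl(1/\scal(g_K),\ \det(\triangledown u,\fut)\bigr)$ solves the weighted Cauchy--Riemann system characterising $(\rho U_\rho, U_z)$ for an axi-symmetric harmonic $U$. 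To verify them I would push the operators $\partial_z,\partial_\rho$ into the $(x_1,x_2)$-chart using the Jacobian of $\bz$ together with the Cauchy--Riemann relations $dz=\ast_{g_\bbR}d\rho$ (recall $z$ is the $g_\bbR$-harmonic conjugate of $\rho$), and then reduce each to a polynomial identity among $u$, $\det\Hess u$, $\scal(g_K)$ and their derivatives, which is forced by Abreu's equation $\scal(g_K)=\sum_{i,k}u^{ik}_{ik}$, by extremality ($\Hess\scal(g_K)=0$), and by the Bach-flat relation $\Lap_{g_K}(1/\scal(g_K))=1/6$ established in the proof of Lemma (\ref{lemma_harmonic}). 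This is the same type of computation as in that lemma and as in Proposition 5.1 and the following pages of \cite{s}; in practice, once $\Lap_\bbR\rho=0$ is known, only one of the two identities requires a genuinely new computation.

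\emph{Boundary behaviour.} Fix a non-removed facet $E_i=l_i^{-1}(0)$ and work near a point in its relative interior. The standard boundary conditions on the symplectic potential give $\triangledown u=\nu_i\log l_i+O(1)$, hence $\det(\triangledown u,\fut)=\det(\fut,\nu_i)\log l_i+O(1)$ (up to a sign fixed by the orientation convention), while $\det\Hess u=\delta/\prod_j l_j$ with $\delta>0$ smooth, so $\det\Hess u\asymp l_i^{-1}$ and therefore $\rho=(\det\Hess u)^{-1/2}/\scal(g_K)\asymp l_i^{1/2}\to 0$, i.e. $\log\rho^2=\log l_i+O(1)$. Thus $E_i$ is carried by $\bz$ to a segment of $\partial\bbH$, and along it $U_z/\log\rho^2=\det(\triangledown u,\fut)/\log\rho^2\to\det(\fut,\nu_i)$ as $\rho\to0$. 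On the other hand, integrating $U_\rho=(\rho\,\scal(g_K))^{-1}$ in $\rho$ at fixed $z$ gives $U=\bigl(\scal(g_K)|_{E_i}\bigr)^{-1}\log\rho+O(1)$, so $U/\log\rho^2$ does converge, as $\rho\to0$, to a function $f_i$ of $z$ alone on the interior of that segment; and since $U_z/\log\rho^2\to\det(\fut,\nu_i)$ uniformly on compact subsegments, $f_i$ is affine there with slope $\det(\fut,\nu_i)$, as claimed. (Replacing $u$ by $u+(\text{affine})$ changes $\det(\triangledown u,\fut)$, and hence $U$, by a constant plus a multiple of $z$, which affects neither (\ref{Uharmonic}) nor the limit of $U/\log\rho^2$, so the normalisations cost nothing.)

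\emph{Main obstacle.} The heart of the matter is the pair of identities in the second step: like Lemma (\ref{lemma_harmonic}) they are ``really a calculation'', but one has to be careful in transporting $\partial_z,\partial_\rho$ to the action coordinates and in combining Abreu's equation, extremality and Bach-flatness. Once they are in place, the axi-symmetric harmonicity is immediate and the boundary statement follows routinely from the known asymptotics of $u$ near $\partial P\setminus E$.
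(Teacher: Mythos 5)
Your proposal is correct and follows essentially the same route as the paper: $U$ is produced as a potential for a closed $1$-form on the simply connected $\bbH$, closedness and axi-symmetric harmonicity are reduced to exactly the two first-order identities relating $1/\scal(g_K)$ and $\det(\triangledown u,\fut)$ that the paper verifies, and the boundary statement is extracted from the Guillemin asymptotics of $u$ and of $\det\Hess u$ near $\partial P\setminus E$. The only step you leave as a sketch — pushing $\partial_z,\partial_\rho$ into the action coordinates — is carried out in the paper via Donaldson's matrix identities $\mA^T\mB=e^{2v}I$ and $\mB=\Hess(u)\,\mA$, which yield the explicit relations between $(\xi_{i,z},\xi_{i,\rho})$ and $(x_{j,z},x_{j,\rho})$; as you anticipate, Abreu's equation, extremality and Bach-flatness are not needed again there, having already been spent in proving $\Lap_\bbR\rho=0$.
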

\begin{proof}
The first point is to show that the two equations are compatible which boils down to showing that 
$$
\frac{\partial}{\partial z }\left( \frac{1}{\rho \scal(g_K)}\right)=\frac{\partial \det(\triangledown u, \fut)}{\partial \rho },
$$
where both functions are thought of as functions as $z+\sqrt{-1}\rho$ but we omit explicitly writing the argument. We start by relating $z+\sqrt{-1}\rho$ with toric complex coordinates. Recall that complex toric coordinates on $X$ are given by $\xi_i+\sqrt{-1}\theta_i$ for $i=1,2$ where $\bf{\xi}=(\xi_1,\xi_2)$ is
$$
\bf{\xi}(\bf{x})=\triangledown u(\bf{x}).
$$
We are going to follow an argument of Donaldson's from \cite{donjoyce}. Consider the metric on the real manifold $g_\bbR.$ We shall express it using complex  coordinates.
\begin{IEEEeqnarray*}{rCl}
\sum_{i,j=1}^2 u_{ij}dx_i\otimes dx_j&=&\sum_{i=1}^2 d\xi_i\otimes dx_i\\ \nonumber
&=&(\xi_{1,z}dz+\xi_{1,\rho}d\rho)\otimes(x_{1,z}dz+x_{1,\rho}d\rho)\\
& & \quad +(\xi_{2,z}dz+\xi_{2,\rho}d\rho)\otimes(x_{2,z}dz+x_{2,\rho}d\rho)\\ \nonumber
&=&(\xi_{1,z}x_{1,z}+\xi_{2,z}x_{2,z})dz^2+ (\xi_{1,\rho}x_{1,\rho}+\xi_{2,\rho}x_{2,\rho})d\rho^2\\ \nonumber
& &\quad  +(\xi_{1,z}x_{1,\rho}+\xi_{2,z}x_{2,\rho}+\xi_{1,\rho}x_{1,z}+\xi_{2,\rho}x_{2,z})dz\otimes d\rho. \\ \nonumber
\end{IEEEeqnarray*}
We set
$$
\mA=\begin{pmatrix}
x_{1,z}& x_{1,\rho}\\
x_{2,z}&x_{2,\rho}
\end{pmatrix},
 \qquad 
 \mB=\begin{pmatrix}
\xi_{1,z}& \xi_{1,\rho}\\
\xi_{2,z}&\xi_{2,\rho}
\end{pmatrix}.
$$
We start by noticing that $\mA^{T}\mB$ is a symmetric matrix. This is because
$$
\mB=\frac{\partial \bf{\xi}}{\partial(z,\rho)}=\frac{\partial \bf{\xi}}{\partial \bf{x}}\frac{\partial \bf{x}}{\partial(z,\rho)}\\ \nonumber=\Hess (u) \mA
$$
since $\bf{\xi}(\bf{x})=\triangledown u(\bf{x})$. Therefore $$\mA^{T}\mB=\mA^{T}\Hess (u) \mA,$$ which is symmetric. On the other hand we have
$$
\sum_{i,j=1}^2 u_{ij}dx_i\otimes dx_j =e^{2v}(dz^2+d\rho^2),
$$ 
which implies $$\mA^T\mB=e^{2v}I.$$ Now by taking determinants
$$
e^{4v}=\det(\mA)\det(\mB)=(\det(\mA))^2\det(\Hess(u))
$$
so that $e^{2v}=\det(\mA)(\det(\Hess(u))^{1/2}$ and this implies that $$\mB=\det(\mA)(\det(\Hess(u))^{1/2}{(\mA^{-1})^T},$$ i.e.
$$
\begin{pmatrix}
\xi_{2,\rho}& \xi_{1,\rho}\\
\xi_{2,z}&\xi_{1,z}
\end{pmatrix}=\det(\Hess(u))^{1/2}\begin{pmatrix}
x_{1,z}& -x_{2,z}\\
-x_{1,\rho}&x_{2,\rho}
\end{pmatrix}.
$$
We rewrite this as
\begin{equation}\label{xi&x}
\begin{cases}
\xi_{1,z}=\frac{1}{\rho}\frac{x_{2,\rho}}{\scal^2(g_K)}\\
\xi_{1,\rho}=-\frac{1}{\rho}\frac{x_{2,z}}{\scal^2(g_K)}\\
\end{cases}
\begin{cases}
\xi_{2,z}=-\frac{1}{\rho}\frac{x_{1,\rho}}{\scal^2(g_K)}\\
\xi_{2,\rho}=\frac{1}{\rho}\frac{x_{1,z}}{\scal^2(g_K)}\\
\end{cases},
\end{equation}
i.e. 
$$
\begin{cases}
\xi_{1,z}=\frac{1}{\rho}\frac{x_{2,\rho}}{(\alpha_1x_1+\alpha_2x_2)^2}\\
\xi_{1,\rho}=-\frac{1}{\rho}\frac{x_{2,z}}{(\alpha_1x_1+\alpha_2x_2)^2}\\
\end{cases}
\begin{cases}
\xi_{2,z}=-\frac{1}{\rho}\frac{x_{1,\rho}}{(\alpha_1x_1+\alpha_2x_2)^2}\\
\xi_{2,\rho}=\frac{1}{\rho}\frac{x_{1,z}}{(\alpha_1x_1+\alpha_2x_2)^2}\\
\end{cases}.
$$
Using this we have
\begin{IEEEeqnarray*}{rCl}
-\frac{\partial \det( \triangledown u,\fut)}{\partial \rho }&=&\frac{\partial \left(-\alpha_2\xi_{1}+\alpha_1\xi_{2}\right)}{\partial \rho }\\ \nonumber
&=&-\alpha_2\xi_{1,\rho}+\alpha_1\xi_{2,\rho}\\ \nonumber
&=&\frac{\alpha_2x_{2,z}}{\rho(\alpha_1x_1+\alpha_2x_2)^2}+\frac{\alpha_1x_{1,z}}{\rho(\alpha_1x_1+\alpha_2x_2)^2}\\ \nonumber
&=&\frac{\frac{\partial}{\partial z}\left(\alpha_1x_1+\alpha_2x_{2}\right)}{\rho(\alpha_1x_1+\alpha_2x_2)^2}\\ \nonumber
&=&-\frac{\partial}{\partial z}\left(\frac{1}{\rho(\alpha_1x_1+\alpha_2x_2)}\right)\\ \nonumber
&=&-\frac{\partial}{\partial z}\left(\frac{1}{\rho \scal(g_K)}\right)\\ \nonumber
\end{IEEEeqnarray*}
which shows that $U$ exists as $\bbH$ is simply connected. Next we show $U$ satisfies Equation (\ref{Uharmonic}).
\begin{IEEEeqnarray*}{rCl}
U_{\rho\rho}+U_{zz}+\frac{U_\rho}{\rho}&=&\frac{\partial}{\partial \rho}\left( \frac{1}{\rho\scal(g_K)}\right)+\frac{\partial \det( \triangledown u,\fut)}{\partial z}+ \frac{1}{\rho^2\scal(g_K)}\\ \nonumber
&=& -\frac{1}{\rho^2\scal(g_K)}- \frac{\alpha_1x_{1,\rho}+\alpha_2x_{2,\rho}}{\rho\scal^2(g_K)} {+\alpha_2\xi_{1,z}-\alpha_1\xi_{2,z}}+\frac{1}{\rho^2\scal(g_K)}\\ \nonumber
&=& - \frac{\alpha_1x_{1,\rho}+\alpha_2x_{2,\rho}}{\rho\scal^2(g_K)} {+\alpha_2\xi_{1,z}-\alpha_1\xi_{2,z}}\\ \nonumber
&=&0
\end{IEEEeqnarray*}
where we made use of Equation (\ref{xi&x}) in the last step.
Next we will focus on the boundary behaviour of $U$. We have 
$$
\rho=\frac{(\det \Hess(u))^{-1/2}}{\scal^2 (g_K)}.
$$
As we have seen 
$$
\det \Hess(u)(\bx)=\frac{\delta(\bx)}{\prod_{i=1}^d l_i(\bx)}, \qquad \bx \in P,
$$
for a positive smooth function $\delta$ on $P\setminus E,$ so that $\rho$ vanishes at $\partial P\setminus E.$ Also near the $i$-the edge of $P\setminus E,$ $2\log(\rho)-\log(l_i)$ is smooth. What is more,
$$
\rho U_{\rho}=\frac{1}{\scal(g_K)}=S_1(z)+ \rho \tilde{S}_2(z,\rho),
$$
so that
$$
U(z,\rho)=S_0(z)+S_1(z)\log(\rho)+ \rho S_2(z,\rho),
$$
where $S_0, S_1, S_2 , \tilde{S}_2$ are functions of $z$, smooth in the interior of segments corresponding to images of facets or $P\setminus E$ via the map $\bf{z},$ and $S_2$ is smooth in a neighbourhood of those segments in $\bbH.$  In particular
$$
\lim_{\rho\rightarrow 0}\frac{U(z,\rho)}{\log(\rho)}=S_1(z).
$$
Now
$$
U_z= \det( \triangledown u,\fut)=S'_0(z)+S'_1(z)\log(\rho)+\rho \frac{\partial S_2}{\partial z}(z,\rho).
$$
Because $u-\sum_{i=1}^dl_i\log(l_i)$ is smooth near the ith facet of $P\setminus E$ for $i=1,\cdots, d$,
$$
\det(\triangledown u-\log(l_i)\nu_i,\fut)
$$
is smooth as well. Near the ith facet $2\log(\rho)-\log(l_i)$ is smooth so 
$$
\det(\triangledown u-2\log(\rho)\nu_i,\fut)
$$
is smooth and this implies that
$$
S'_0(z)+S'_1(z)\log(\rho)+\rho \frac{\partial S_2}{\partial z}(z,\rho)-\log(\rho^2)\det (\nu_i,\fut)
$$
is smooth therefore
$$
S'_1(z)=-2\det (\nu_i,\fut)
$$ 
is constant on the interior of the image of the $i$th edge and $S_1(z)$ is affine on the interior of the image of that edge as claimed.
\end{proof}

{Next we will show that the vanishing locus of $\scal(g_K)$ is highly restricted. The lemma below although simple plays a very important role in our results. In particular it explains the condition on the extremal vector field appearing in Theorem (\ref{thm_main2}).
\begin{lemma}
In the setting of Proposition (\ref{U}) let $\fut$ be the extremal vector field associated with $P\setminus E.$ There are three possibilities for the vanishing locus of $\scal(g_K).$
\begin{itemize}
\item If $\det(\fut,\nu_1),\det(\fut,\nu_{d})\ne 0$ i.e. if $\fut$ is not perpendicular to either one of the non-compact edges of $P\setminus E$ then $\scal(g_K)$ vanishes along $D.$
\item If exactly one of the two numbers $\det(\fut,\nu_1), \det(\fut,\nu_{d})$ is zero, then $\scal(g_K)$ vanishes at a single point in $D$ corresponding to either $E\cap E_d$ or $E\cap E_1$ via the moment map.
\item If $\det(\fut,\nu_1)=0=\det(\fut,\nu_{d})$ then $\scal(g_K)$  is nowhere vanishing over $X.$
\end{itemize}
\end{lemma}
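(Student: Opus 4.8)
The plan is to reduce the whole statement to elementary facts about the single affine function $\scal(g_K)=\alpha_1x_1+\alpha_2x_2=\langle\bx,\fut\rangle$ on the closed polytope $\overline{P}$ (this is the extremal affine function, normalised to vanish at the origin, with $\fut=(\alpha_1,\alpha_2)$). Its zero locus in $\overline{P}$ is the intersection of $\overline{P}$ with the line $\{\langle\bx,\fut\rangle=0\}$ through the origin, hence is empty, a single point, or a segment; the three bullets just record which of these occurs and where it sits. Two things have to be proved: (i) this zero locus never meets $X$ itself, i.e. never meets $P\setminus E$, so it is trapped inside the removed edge $E$; and (ii) precisely where inside $E$ it lies, which will be governed by $\det(\fut,\nu_1)$ and $\det(\fut,\nu_d)$, the quantities attached to the two non-compact edges $E_1,E_d$ of $P\setminus E$.

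For (i) I would invoke Proposition \ref{U}. Since $\rho U_\rho=1/\scal(g_K)$ and $U$ is smooth on all of $\bbH$, the function $1/\scal(g_K)$ is finite on $\interior(\bbH)$, so $\scal(g_K)$ cannot vanish on $\interior(P)$. On the interior of each boundary segment the identity $1/\scal(g_K)=S_1(z)$ from the proof of Proposition \ref{U} displays $1/\scal(g_K)$ as a finite affine function of $z$, so $\scal(g_K)$ cannot vanish on the interior of any edge $E_i$ of $P\setminus E$. Finally, near a finite vertex $E_i\cap E_{i+1}$ the relevant $z$-values stay bounded, so the affine function $S_1$ has a finite one-sided limit there, forcing $\langle v,\fut\rangle\neq 0$ at that vertex because $\langle\cdot,\fut\rangle$ is continuous on $\overline{P}$. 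Thus $\scal(g_K)$ is nowhere zero on $P\setminus E=\bx(X)$; this already yields the third bullet and, in general, confines the zero locus of $\scal(g_K)$ in $\overline{P}$ to $E$.

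For (ii) I would evaluate $\langle\cdot,\fut\rangle$ at the two vertices $v_1=E_1\cap E$ and $v_d=E_d\cap E$ of $E$. By Lemma \ref{lemma_bij} and the analysis of \cite{s} that it quotes, the $\bz$-image of the non-compact edge $E_1$ is an unbounded subinterval of $\partial\bbH$ whose open end (the one missing $v_1$) corresponds to $z\to\pm\infty$; and along this image $1/\scal(g_K)=S_1(z)$ is affine in $z$ with slope $2\det(\fut,\nu_1)$, by Proposition \ref{U}. Hence if $\det(\fut,\nu_1)\neq 0$ then $S_1(z)\to\pm\infty$ at the open end, so $\scal(g_K)\to 0$ along $E_1$ as $\bx\to v_1$ and, by continuity of $\langle\cdot,\fut\rangle$ on $\overline{P}$, $\langle v_1,\fut\rangle=0$; while if $\det(\fut,\nu_1)=0$ then $S_1$ is constant along $E_1$, necessarily nonzero by (i), so $\langle v_1,\fut\rangle\neq 0$. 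The identical dichotomy holds at $v_d$ with $\det(\fut,\nu_d)$. Now $\langle\cdot,\fut\rangle$ restricted to the segment $E=[v_1,v_d]$ is affine, so its zero set on $E$ is determined by these two endpoint values: if $\det(\fut,\nu_1)$ and $\det(\fut,\nu_d)$ are both nonzero, both values vanish and $\langle\cdot,\fut\rangle\equiv 0$ on $E$, so $\scal(g_K)$ vanishes all along $D=\bx^{-1}(E)$; if exactly one of them vanishes, only the corresponding endpoint value is zero and $\langle\cdot,\fut\rangle$ vanishes on $E$ at that vertex alone, so $\scal(g_K)$ vanishes at the single point of $D$ over $E\cap E_d$ (or $E\cap E_1$); and if both vanish, both endpoint values are nonzero, so the zero set of $\scal(g_K)$, if non-empty at all, lies in the interior of $E$, hence off $X$.

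The step I expect to be the real obstacle is (i): excluding a zero of $\scal(g_K)$ in the interior of $P\setminus E$ or on one of the surviving edges is exactly what lets the argument dispense with asymptotic hypotheses, and it rests on Proposition \ref{U} genuinely supplying $U$ as a smooth harmonic function on the whole of $\bbH$ (so that $1/\scal(g_K)=\rho U_\rho$ cannot blow up in the interior) together with the correct identification of the ``end at infinity'' $\bz(v_i)$ of each non-compact edge. Once these are in place the remainder is purely formal, since a nonvanishing affine function on a segment is controlled by its two boundary values.
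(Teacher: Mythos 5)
Your argument is correct and is essentially the paper's: both read off the values of the affine function $\scal(g_K)$ at the two vertices of $E$ from the asymptotic slopes $\det(\fut,\nu_1)$, $\det(\fut,\nu_d)$ of $f=1/(2\scal(g_K)(\cdot,0))$ as $z\to\pm\infty$, and combine this with the fact that $\scal(g_K)$ cannot vanish on $X$. The only cosmetic difference is that the paper gets the nonvanishing on $X$ directly from the smoothness of $g=g_K/\scal^2(g_K)$ --- which is also the cleaner route, since the finiteness of $\rho$ and the existence of $U$ on all of $\bbH$ that you invoke already presuppose it --- and handles the second and third bullets by observing that the line $\{\scal(g_K)=0\}$ is then parallel to the relevant non-compact edge(s), rather than by evaluating the affine function at the endpoints of $E$.
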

\begin{proof}
Let us use the same notation as in Proposition (\ref{U}). As we have just seen, the function
$$
f(z):=\lim_{\rho\rightarrow 0}\frac{U(z,\rho)}{\log(\rho^2)}
$$
is locally affine with slope $\det(\fut,\nu_i)$ over $]z_{i-1},z_i[$ where by convention $z_0=-\infty,$ $z_d=+\infty$ and $i=1,\cdots, d.$ On the other hand it follows from the proof above that 
$$ f(z)=\frac{1}{2\scal(g_K)(z,0)}.$$
Note also that $\scal(g_K)$ can be thought of as affine function on $P \subset \bbR^2$ whose vanishing locus is a line normal to $\fut.$ Furthermore, the limits $z \to \pm \infty$ correspond to limits along the non-compact ends of the edges $E_1$ and $E_d$
\begin{itemize}
\item If $\det(\fut,\nu_1),\det(\fut,{\nu_{d}})\ne 0$ we see that 
$$
\lim_{z\rightarrow \pm\infty}\frac{1}{|\scal(g_K)|(z,0)}= \lim_{z\rightarrow \pm\infty}|f(z)| =\infty
$$
so that 
$$
\lim_{z\rightarrow \pm\infty}{\scal(g_K)(z,0)}=0.
$$
This shows that $\scal(g_K)$ vanishes at the vertices $E\cap E_1$ and {$E\cap E_{d}$} so that it must vanish along $E$ because it is affine.
\item If exactly one of the two numbers $\det(\fut,\nu_1), \det(\fut,\nu_{d})$ is zero then the same reasoning shows that $\scal(g_K)$ vanishes at exactly one the vertices $E\cap E_1,$   $E\cap E_{d}.$ The extremal vector field is proportional to either $\nu_{d-1}$ or $\nu_{1}$ so the line $\scal(g_K)=0$ is parallel to either $E_1$ if  $\det(\fut,\nu_1)=0$ or $E_{d}$ if $\det(\fut,\nu_d)=0$ and intercepts $\overline{P}$ at a single vertex.
\item If $\det(\fut,\nu_1)=\det(\fut,\nu_{d})=0$ then the edges $E_1$ and $E_{d}$ must be parallel and both parallel to the line $\scal(g_K)=0$ in $\bbR^2.$ If this line were to intersect $\overline{P}$ then it would contain points in $P\setminus E$ and so $\scal(g_K)$ would vanish at points in $X$ and this would imply that $g=\frac{g_k}{\scal^2(g_K)}$ had singularities in $X$ which it does not.  
\end{itemize}
\end{proof}}
In comparing the above proposition with \cite{bg}, it is natural to study the boundary behaviour function of $U$ given by
$$
\lim_{\rho\rightarrow 0}\frac{U(z,\rho)}{\log(\rho^2)}.
$$
It follows from Theorem (\ref{thmBG2}) that such a function will yield a $U^{BG}$ if it is of the form
$$
A+\sum_{i=1}^d a_i|z-z_i|,
$$
for $A>0,$ {$a_1,\cdots, a_d>0.$}  Among those functions which are affine in segments and continuous, these are special.

Let $X$ be a toric K\"ahler surface with moment polytope $P$ so that
$$
P=\{(x_1,x_2)\in \bbR^2: l_i(x_1,x_2):=\langle (x_1,x_2),\nu_i\rangle- \lambda_i\geq 0, i=1,\cdots, d+1\},
$$
where $\nu_i$ are the primitive interior normals to the facets of $P.$ Let $D$ be a torus invariant divisor in $X$ whose image via the moment map is $E$ with normal $\nu_{d+1},$ the edge at infinity. Recall that the extremal vector field $\fut$ of $P\setminus E$ is of the form $\alpha_1\frac{\partial}{\partial x_1}+\alpha_2\frac{\partial}{\partial x_2}$ for constants $\alpha_1,\alpha_2.$ 

\begin{lemma}\label{nu1+nud}
Consider a partition of $\partial \bbH$ into $d$ segments $]z_i,z_{i+1}[,$ $i=0,\cdots,d-1$ where we set $z_0=-\infty$ and $z_{d}=+\infty.$ Consider a function $f$ on $\partial \bbH$ which is continuous over $\bbR$ and affine in the interior of the above segments in $\partial \bbH$ and whose slope in $]z_i,z_{i+1}[$ is $\det(\fut, \nu_{i+1}).$ There are constants $a_1,\cdots, a_d,$ {$A$ and $B$ such that 
$$
f(z)=\sum_{i=1}^{d-1}a_i|z-z_i|+A+Bz.
$$ 
Also, $B=0$ iff $\det(\nu_1+\nu_d,\fut)=0.$}
\end{lemma}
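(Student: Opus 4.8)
The plan is to reconstruct the coefficients $a_1,\dots,a_{d-1}$, $A$ and $B$ directly from the prescribed slopes, exploiting the elementary fact that $|z-z_i|$ is continuous, piecewise affine, with derivative $-1$ to the left of $z_i$ and $+1$ to the right. Consequently, for any choice of $a_i$'s and of $A,B$, the function $F(z):=\sum_{i=1}^{d-1}a_i|z-z_i|+A+Bz$ is continuous on $\bbR$, affine on each segment $]z_k,z_{k+1}[$, and has derivative there equal to $\bigl(\sum_{i=1}^{k}a_i-\sum_{i=k+1}^{d-1}a_i\bigr)+B$, with the convention that the first sum is empty (hence zero) for $k=0$ and the second for $k=d-1$. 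In particular the derivative of $F$ jumps by exactly $2a_k$ as $z$ crosses $z_k$. I would record these conventions carefully before starting, since this index bookkeeping is the only place where sign errors can creep in.

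First I would impose that the slope jumps of $F$ match those of $f$: the slope of $f$ jumps from $\det(\fut,\nu_k)$ on $]z_{k-1},z_k[$ to $\det(\fut,\nu_{k+1})$ on $]z_k,z_{k+1}[$, so matching forces $2a_k=\det(\fut,\nu_{k+1})-\det(\fut,\nu_k)$, i.e. $a_k=\tfrac12\det(\fut,\nu_{k+1}-\nu_k)$ for $k=1,\dots,d-1$. With these values fixed, the difference between the slope of $f$ on $]z_k,z_{k+1}[$ and the slope of $\sum_i a_i|z-z_i|$ there has the same increment $2a_k$ at each breakpoint, hence is independent of $k$; call this common value $B$. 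Then $\sum_i a_i|z-z_i|+Bz$ and $f$ are two continuous, piecewise affine functions on $\bbR$ with identical derivatives, so their difference is a constant, which we name $A$. This gives the claimed formula $f(z)=\sum_{i=1}^{d-1}a_i|z-z_i|+A+Bz$.

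It remains to identify $B$. Reading off the common difference on the leftmost segment $]-\infty,z_1[$, where the slope of $f$ is $\det(\fut,\nu_1)$ and the slope of $\sum_i a_i|z-z_i|$ is $-\sum_{i=1}^{d-1}a_i$, gives $B=\det(\fut,\nu_1)+\sum_{i=1}^{d-1}a_i$. A telescoping sum yields $\sum_{i=1}^{d-1}a_i=\tfrac12\det(\fut,\nu_d-\nu_1)$, hence $B=\tfrac12\bigl(\det(\fut,\nu_1)+\det(\fut,\nu_d)\bigr)=\tfrac12\det(\fut,\nu_1+\nu_d)$. Therefore $B=0$ if and only if $\det(\nu_1+\nu_d,\fut)=0$ (using antisymmetry of the determinant), which is exactly the asserted equivalence.

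I do not expect any genuine obstacle here: the proof is purely a bookkeeping computation with piecewise-linear functions, and there is no analytic content. The only thing requiring care is the index convention — that the slope on $]z_i,z_{i+1}[$ is $\det(\fut,\nu_{i+1})$ and that $z_0=-\infty$ and $z_d=+\infty$ are not genuine breakpoints — which, as noted, I would spell out explicitly at the outset to avoid sign mistakes.
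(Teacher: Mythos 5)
Your proof is correct and follows essentially the same route as the paper's: both match slope jumps to obtain $2a_k=\det(\fut,\nu_{k+1}-\nu_k)$, and both identify $2B=\det(\fut,\nu_1+\nu_d)$ by comparing the slopes on the two unbounded segments. Your telescoping computation of $\sum_i a_i$ is just a slightly more explicit version of the paper's observation that the $Bz$ term is absent precisely when the slope at $-\infty$ is the negative of the slope at $+\infty$.
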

\begin{proof}
We start by justifying the last statement assuming the first. Notice that over $]-\infty,z_1[,$ $\sum_{i=1}^{d-1}a_i|z-z_i|$ has slope $-\sum_{i=1}^{d-1}a_i$ whereas it has slope $\sum_{i=1}^{d-1}a_i$ over $]z_{d-1},\infty[.$ Therefore, $f$ will be of the form $A+\sum_{i=1}^{d-1}a_i|z-z_i|$ if and only if its slope at $-\infty$ is the opposite of its slope at $+\infty$, i.e.
$$
\det (\nu_1,\fut)=-\det( \nu_d,\fut).
$$
In any case we have
{
$$
\sum_{i=1}^{d-1}a_i|z-z_i|+A+Bz=
\begin{cases}
\left(-\sum_{i=1}^{d-1}a_i +B\right)z+A_1, \, \text{on} \, ]z_0,z_{1}[,\\
\left(-\sum_{i=1}^{d-1}a_i+2a_1+B\right) z+A_2, \, \text{on} \, ]z_1,z_{2}[,\\
\cdots \\
\left(-\sum_{i=1}^{d-1}a_i+2a_1+\cdots +2a_j+B\right)+A_j,\,  \text{on} \, ]z_j,z_{j+1}[,
\end{cases}
$$
where $A_i$ are constants for $i=1,\cdots d-1,$ and 
$$
f(z)=
\begin{cases}
\det(\fut,\nu_1)z+C_1, \, \text{on} \, ]z_0,z_{1}[,\\
\det(\fut,\nu_2)z+C_2, \, \text{on} \, ]z_1,z_{2}[,\\
\cdots \\
\det(\fut,\nu_{j+1})z+C_j,\,  \text{on} \, ]z_j,z_{j+1}[. 
\end{cases}
$$
where $C_i$ are constants for $i=1,\cdots d-1.$ Set
$$
2a_j={\det(\fut,\nu_{j+1}-\nu_{j})}, \qquad j=1,\cdots, d-1,
$$
and 
$$
2B=\det(\fut,\nu_1+\nu_d).
$$
This choice for the $a_i$'s and $B$ will yield $f(z)=A+Bz+\sum_{i=1}^{d-1} a_i|z-z_i|$ for the right choice of $A.$}
\end{proof}
{Our next lemma will be relevant for the proof of our main theorem.
	
\begin{lemma}\label{lemma:scal>0}
In the setting of Proposition (\ref{U}), $\scal(g_K)>0$ over $X.$
\end{lemma}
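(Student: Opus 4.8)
The plan is to obtain $\scal(g_K)>0$ by combining two facts: that $\scal(g_K)$ keeps a constant sign on $X$, and that its integral over the moment polytope is strictly positive.

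First I would note that $\scal(g_K)$ is nowhere vanishing on $X$: since $g=g_K/\scal^2(g_K)$ is a genuine smooth nondegenerate metric on all of $X=M\setminus D$, the function $\scal(g_K)$ cannot vanish at any point of $X$, and as $X$ is connected it has a constant sign there. (Equivalently, the previous lemma shows that the zero line of the affine function $\scal(g_K)$ meets neither $\interior(P)$ nor the edges $E_1,\dots,E_d$.) So it only remains to exclude $\scal(g_K)<0$.

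To fix the sign I would use that $g_K$ is an extremal toric K\"ahler metric with moment polytope $P\setminus E$, so $\scal(g_K)$ equals the extremal affine function of $P\setminus E$. By the convention recalled earlier for $M\setminus D$, this is the extremal affine function of the compact polytope $P$ computed with the boundary measure $d\sigma$ given weight zero along the removed edge $E$. Evaluating its defining identity on the affine function $h\equiv 1$ then yields
\[
\int_{P}\scal(g_K)\,dx_1\,dx_2=\int_{\partial P\setminus E}d\sigma .
\]
The right-hand side is the total mass of the positive measure $d\sigma$ over the non-empty union $E_1\cup\dots\cup E_d$, hence is strictly positive. Therefore $\int_P\scal(g_K)>0$, and together with the constant sign this forces $\scal(g_K)>0$ on $\interior(P)$, and hence on all of $X$.

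The one step that deserves care -- and which I take to be the crux of the argument -- is the identification of $\scal(g_K)$ with the extremal affine function of $P$ carrying weight zero along $E$, so that the classical compact-polytope identity applies verbatim on $P$. Once that is in place, no maximum principle, convexity estimate, or asymptotic analysis is needed: the positivity is forced by a single integration against the constant $h\equiv 1$.
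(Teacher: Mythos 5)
Your first step (non-vanishing of $\scal(g_K)$ on $X$, hence constant sign) matches the paper exactly. But the step you yourself flag as ``the crux'' --- that $\scal(g_K)$ satisfies the compact-polytope identity $\int_{P}\scal(g_K)\,h\,dx_1dx_2=\int_{\partial P\setminus E}h\,d\sigma$ for affine $h$, with weight zero along $E$ --- is a genuine gap, not a routine verification. That identity is obtained by integrating Abreu's formula $\scal(g_K)=-\sum u^{ij}_{\ ij}$ by parts twice, and the boundary contributions along each face are controlled only where the symplectic potential satisfies Guillemin-type boundary conditions, i.e.\ along $E_1,\dots,E_d$. Along the removed edge $E$ the potential $u$ does \emph{not} satisfy those conditions, and the terms $u^{ij}_{\ j}\nu_i\,h$ and $u^{ij}\nu_i\,h_j$ picked up there need not vanish; showing that they do amounts to controlling the asymptotics of the metric near the divisor $D$, which is precisely the kind of hypothesis the paper is structured to avoid. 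The Remark in the paper stating that the extremal affine function of $M\setminus D$ is that of $M$ ``with weight zero along $E$'' is a convention for how that affine function is to be computed from the polytope; it is not a proof that the scalar curvature of an arbitrary extremal metric on $M\setminus D$, with unknown behaviour near $D$, obeys the zero-weight identity. So as written your integral inequality $\int_P\scal(g_K)>0$ is not established.

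The paper's proof takes a purely local route that sidesteps all of this. After the constant-sign observation, it assumes $\scal(g_K)<0$ and runs a short case analysis (according to whether $\det(\fut,\nu_1)$ and $\det(\fut,\nu_d)$ vanish) to show that the affine function $\scal(g_K)$ attains its minimum over $\overline{P}$ at a point of $P\setminus E$, hence at an actual point of the manifold $X$. At that point LeBrun's identity for Bach-flat K\"ahler surfaces conformal to a Ricci-flat metric,
$$
0=-6\,\scal(g_K)\,\Lap\scal(g_K)-12|\grad\scal(g_K)|^2+(\scal(g_K))^3,
$$
forces $6\Lap\scal(g_K)=(\scal(g_K))^2>0$ at a minimum, a contradiction. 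Note that this argument uses the Bach-flatness/conformal Ricci-flatness in an essential pointwise way, whereas your argument would use only extremality plus the boundary identity; that your route appears to need so much less is itself a sign that the deferred identification is where the real work lies. If you want to keep your approach, you must either prove the vanishing of the boundary terms at $E$ for the metrics at hand, or replace that step with the interior-minimum argument above.
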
}
\begin{proof}
{As we have said, a zero of  $\scal(g_K)$ over $X$ is a singularity for $g=\frac{g_K}{\scal^2(g_K)}$ and therefore  $\scal(g_K)$ cannot vanish over $X$ so that its sign remains constant. We shall argue by contradiction and assume that $\scal(g_K)<0.$ Consider $\scal(g_K)$ as an affine function in $\bbR^2$ so that $\scal(g_K)<0$ on $P\setminus E$ means that $\scal(g_K)$ has a negative minimum on ${P}.$ 

We want to start by showing that this minimum must be achieved over $P\setminus E$ so that $\scal(g_K)$ has a negative minimum on $X$. 
\begin{itemize}
\item If $\det(\fut,\nu_1),\det(\fut,\nu_{d})\ne 0$ then $\scal(g_K)$ vanishes along $E$ so its minimum on ${P}$ cannot be achieved on $E$ and must be achieved on $P\setminus E.$
\item If exactly one of the two numbers $\det(\fut,\nu_1), \det(\fut,\nu_{d})$ is zero, say without loss of generality  $\det(\fut,\nu_1),$ then $\scal(g_K)$ vanishes along a line in $\bbR^2$ parallel to $E_1$ through the vertex $E\cap E_{d}.$ As $\scal(g_K)$ is affine, its minimum in $P$ is attained along a line parallel to this that intersects $P.$ All such lines intersect also $P\setminus E$ except perhaps the one through $E\cap E_{d}.$ But on this line $\scal(g_K)$ is zero and therefore not minimal.
\item If $\det(\fut,\nu_1)=\det(\fut,\nu_{d})=0$ then $\scal(g_K)$ vanishes along a line in $\bbR^2$ parallel to $E_1$ and $E_{d}$ and also attains its minimum along one such line. But such a line cannot intercept ${P}$ without intersecting $P\setminus E.$ Therefore the minimum of $\scal(g_K)$ is attained over $X.$
\end{itemize}}
The rest of the argument is due to LeBrun (see \cite{l1}). The fact that the curvature of the metric $\frac{g_K}{\scal^2(g_K)}$ is zero can be expressed as
$$
0=-6\scal (g_K)\Lap\scal (g_K)-12|\grad \scal (g_K)|^2+(\scal (g_K))^3.
$$
At a minimum of $\scal(g_K)$ over $X,$ $\grad \scal(g_K)=0$ and $\Lap\scal(g_K)<0$. But
$$
6\Lap\scal(g_K)=(\scal(g_K))^2>0,
$$
and we get a contradiction.
Hence $\scal (g_K)$ is always positive on $X.$ 
\end{proof}

	The final result of this section is a consequence of the proofs of Lemmas (\ref{nu1+nud}) and (\ref{lemma:scal>0}).
	
\begin{cor}\label{cor:B=0}
	If $\det(\fut,\nu_1),\det(\fut,\nu_{d})\ne 0$, then $2B=\det(\fut,\nu_1+\nu_d)=0$.
\end{cor}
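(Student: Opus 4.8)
The plan is to combine the formula for $B$ obtained in the proof of Lemma (\ref{nu1+nud}) with the description of the zero set of $\scal(g_K)$ used in the proof of Lemma (\ref{lemma:scal>0}), and then to feed in the Delzant condition at the two endpoints of the removed edge $E$.

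First I would recall from the proof of Lemma (\ref{nu1+nud}) that, writing $f(z)=\lim_{\rho\to 0}U(z,\rho)/\log(\rho^2)$, the slope of $f$ on the unbounded segment $]z_0,z_1[$ equals $\det(\fut,\nu_1)$, the slope on $]z_{d-1},z_d[$ equals $\det(\fut,\nu_d)$, and $2B=\det(\fut,\nu_1+\nu_d)$ is their sum. So it suffices to prove that $\det(\fut,\nu_1)=-\det(\fut,\nu_d)$.

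Next I would pin down the direction of $\fut$. Since $\det(\fut,\nu_1)\neq 0$ we have $\fut\neq 0$, so $\scal(g_K)=\alpha_1x_1+\alpha_2x_2$ is a non-constant affine function on $\bbR^2$ and its zero set is a line. Under the hypothesis $\det(\fut,\nu_1),\det(\fut,\nu_d)\neq 0$, the first bullet in the proof of Lemma (\ref{lemma:scal>0}) (which restates the lemma on the vanishing locus of $\scal(g_K)$) gives that $\scal(g_K)$ vanishes along the \emph{entire} edge $E=E_{d+1}$. As $E$ has more than one point and $\scal(g_K)$ is affine, $\scal(g_K)$ vanishes on the whole line spanned by $E$, namely $\{l_{d+1}=0\}$; hence $\scal(g_K)=c\,l_{d+1}$ for some $c\neq 0$ (in fact $c>0$ by Lemma (\ref{lemma:scal>0}), but only $c\neq 0$ is needed here), and comparing linear parts gives $\fut=c\,\nu_{d+1}$.

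Finally I would invoke the Delzant relation at the two endpoints of $E$. At the vertex $E\cap E_1$ the facet normals $\nu_{d+1}$ and $\nu_1$ meet, and at the vertex $E\cap E_d$ the facet normals $\nu_d$ and $\nu_{d+1}$ meet; since consecutive facet normals of $P$ have determinant $-1$, this gives $\det(\nu_{d+1},\nu_1)=-1$ and $\det(\nu_d,\nu_{d+1})=-1$, i.e. $\det(\nu_{d+1},\nu_d)=1$. Therefore
$$
2B=\det(\fut,\nu_1+\nu_d)=c\bigl(\det(\nu_{d+1},\nu_1)+\det(\nu_{d+1},\nu_d)\bigr)=c\,(-1+1)=0 .
$$
The one delicate step is the identification $\fut=c\,\nu_{d+1}$: it rests on knowing that $\scal(g_K)$ vanishes along the whole edge $E$ and not merely near its non-compact ends, which is exactly what the boundary analysis of $U$ in Proposition (\ref{U}) provides through the identity $f(z)=\frac{1}{2\scal(g_K)(z,0)}$ and the behaviour of $f$ as $z\to\pm\infty$. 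Everything else is bilinearity of $\det$ together with the Delzant normalization of $P$.
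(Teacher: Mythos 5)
Your argument is correct and follows essentially the same route as the paper: both deduce from the hypothesis that $\scal(g_K)$ vanishes along the whole edge $E=E_{d+1}$ (via the vanishing-locus lemma), hence $\fut$ is proportional to $\nu_{d+1}$, and then apply the Delzant condition at the two vertices $E\cap E_1$ and $E\cap E_d$ together with bilinearity of $\det$ to get $\det(\fut,\nu_1+\nu_d)=0$. Your extra care in justifying $\fut=c\,\nu_{d+1}$ by comparing linear parts is a welcome elaboration of a step the paper states without comment, but it is not a different proof.
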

\begin{proof}
	We saw in the proof of Lemma (\ref{lemma:scal>0}) that under these hypothesis the scalar curvature vanishes along the edge $E=E_{d+1}$. Hence, $\eta$ must be proportional to $\nu_{d+1}$. On the other hand, the smoothness of $M$ implies that
	$$\det(\nu_1,\nu_{d+1})=1=\det(\nu_{d+1},\nu_d),$$
	and so
	$$\det(\nu_1+\nu_d,\nu_{d+1})=0,$$
	as claimed.
\end{proof}

\section{Proof of the main theorems}

We are now in a position to prove Theorems (\ref{thm_main}) and (\ref{thm_main2}).

\subsection{Proof of Theorem (\ref{thm_main})}

As before, consider $M$ a compact toric K\"ahler surface with moment polytope $P.$  Let $D$ be a torus invariant divisor in $M$ whose moment map image is a facet $E$ in $P$ which we may assume is the $d+1$ facet.  Assume that $X=M\setminus D$ is endowed with a conformally K\"ahler, Ricci-flat, toric metric $g$ determining a function $U$ as in Proposition (\ref{U}) and a function
$$
f(z)= \lim_{\rho\rightarrow 0}\frac{U(z,\rho)}{\log(\rho^2)}.
$$
From Lemma (\ref{nu1+nud}) we know that there are numbers $a_1,\cdots,a_d,$ constants $A,B$ and $z_1,\cdots z_{d-1}$ points on $\partial \bbH$ such that
{
$$
f(z)=A+Bz+\sum_{i=1}^{d-1}a_i|z-z_i|.
$$
}
{Set $U^{ref}$ to be 
\begin{equation}\label{explicitUBG}
\begin{aligned}
U^{ref}=&2\sum_{i=1}^{d-1} a_i\left(\sqrt{\rho^2+(z-z_i)^2}-(z-z_i)\log\frac{\sqrt{\rho^2+(z-z_i)^2}+z-z_i}{\rho}\right)\\
&+(A+Bz)\log(\rho^2),
\end{aligned}
\end{equation}
{and notice that this too, satisfies
$$
\lim_{\rho\rightarrow 0}\frac{U^{ref}(z,\rho)}{\log(\rho^2)} = f(z).
$$
}
By Theorem (\ref{thmBG2}) this will yield a conformally K\"ahler, Ricci-flat, toric metric if $A>0$ and $a_1,\cdots, a_d>0$ and $B=0$ but we do {not} assume these conditions for now.}

\begin{lemma}
	In the setup described above, we have
	$$U=U^{ref}.$$
\end{lemma}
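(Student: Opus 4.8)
The plan is to prove that $U = U^{ref}$ by showing that their difference is an axi-symmetric harmonic function on $\bbH$ with trivial boundary behaviour, and then invoking a uniqueness/Liouville-type argument. First I would set $W = U - U^{ref}$. By Proposition (\ref{U}), $U$ is axi-symmetric harmonic, and $U^{ref}$ is axi-symmetric harmonic by direct inspection (each summand in Equation (\ref{explicitUBG}) is a standard harmonic building block on $\bbR^3$: the terms $\sqrt{\rho^2+(z-z_i)^2}$ and their transforms, together with $\log(\rho^2)$ and $z\log(\rho^2)$, are all axi-symmetric harmonic). Hence $W$ is axi-symmetric harmonic on $\bbH$. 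Moreover, by construction both $U$ and $U^{ref}$ satisfy
$$
\lim_{\rho \to 0}\frac{U(z,\rho)}{\log(\rho^2)} = f(z) = \lim_{\rho \to 0}\frac{U^{ref}(z,\rho)}{\log(\rho^2)},
$$
so $W/\log(\rho^2) \to 0$ as $\rho \to 0$; that is, $W$ extends continuously (indeed, with the right regularity at the edges) to $\partial \bbH$ and vanishes along the open segments there — or more precisely, the logarithmically-singular part of $W$ vanishes on each boundary segment, and what remains of $W$ on $\partial \bbH$ is a continuous function which is smooth on the interior of each segment.

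The next step is to pin down the behaviour of $W$ on the boundary and at infinity precisely enough to conclude $W \equiv 0$. On each boundary segment the bounded part of $U$ (the $S_0(z) + \rho S_2(z,\rho)$ piece in the notation of the proof of Proposition (\ref{U})) and the corresponding piece of $U^{ref}$ must be compared; the key is that the $z$-derivative $U_z = \det(\triangledown u, \fut)$ has a prescribed form, and the analogous identity for $U^{ref}$ (being built from the Biquard-Gauduchon ansatz with the same slope data $\det(\fut, \nu_i)$) forces $W_z$ to have no logarithmic singularity and to vanish on the boundary as well. Thus $W$ restricts to a bounded harmonic function on $\bbH$ that vanishes on $\partial \bbH$, with controlled growth at the point at infinity (coming from the matching asymptotics of $U$ and $U^{ref}$ as $\rho^2 + z^2 \to \infty$, both governed by $f$). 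By the reflection principle across $\partial \bbH$ and a Liouville/Phragmén–Lindelöf argument — or, equivalently, by viewing $W$ as a genuine axi-symmetric harmonic function on an open subset of $\bbR^3$ that extends across the $z$-axis and is bounded — one concludes $W$ is constant, and the constant is zero by the vanishing on the boundary.

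The subtle point — and the one I would treat most carefully — is the behaviour at infinity and at the finitely many boundary points $z_1,\dots,z_{d-1}$ (and at the two ends $z \to \pm\infty$). A priori $U$ is only known to exist via integration of the two first-order equations in Proposition (\ref{U}), so its global growth is not immediately under control; one must use Lemma (\ref{lemma_bij}) (the bijectivity of $\bz$) together with the known boundary behaviour of the symplectic potential $u$ at $\partial P \setminus E$ to rule out any additional harmonic function (e.g.\ a nonzero additive term like $c$, or a term growing like the real part of a holomorphic function) sneaking into $W$. In particular one must check that the normalisation $\sigma(x) = 0$ defining the adapted K\"ahler form, or an equivalent normalisation of $U$ at a chosen point, kills the additive constant. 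The main obstacle, then, is establishing the correct decay/growth estimates for $W$ near $\partial\bbH$ and at infinity so that the reflection-plus-Liouville argument applies cleanly; once that is in place, $W \equiv 0$ and hence $U = U^{ref}$ follows.
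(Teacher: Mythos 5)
Your overall strategy (show that the difference of two axi-symmetric harmonic functions with matching boundary data vanishes) is in the right spirit, but two essential steps are missing and one claim is unjustified. First, you assert that $W=U-U^{ref}$ vanishes on $\partial \bbH$. The matching of $\lim_{\rho\to 0}U/\log(\rho^2)$ with $\lim_{\rho\to 0}U^{ref}/\log(\rho^2)$ only says that the coefficients of the logarithmic singularity agree: in the expansions $U=S_0(z)+S_1(z)\log\rho+\rho S_2(z,\rho)$ and $U^{ref}=S_0^{ref}(z)+S_1^{ref}(z)\log\rho+\rho S_2^{ref}(z,\rho)$ one has $S_1=S_1^{ref}=2f$, but nothing forces $S_0=S_0^{ref}$; the smooth part of $U_z=\det(\triangledown u,\fut)$ along a facet is governed by the unknown smooth part of the symplectic potential, not by the slope data $\det(\fut,\nu_i)$ alone. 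So $W$ need not vanish on $\partial\bbH$, and your final step (``the constant is zero by the vanishing on the boundary'') does not go through.

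Second --- and this is the point you flag but do not resolve --- there is no a priori control of $U$ at infinity in $\bbH$: the function $f$ prescribes only the behaviour as $\rho\to 0$, so the ``controlled growth at the point at infinity'' is precisely what must be proved, and the reflection/Phragm\'en--Lindel\"of argument cannot start without it. The paper supplies the missing mechanism by working not with $W$ but with $\gamma=(U^{ref}_\rho-U_\rho)/\rho$, which satisfies $\gamma_{\rho\rho}+\gamma_{zz}+3\gamma_\rho/\rho=0$, i.e.\ is harmonic as an axi-symmetric function on $\bbR^5$, and extends smoothly across the axis (this uses $S_2=O(\rho)$, extracted from Equation (\ref{Uharmonic})). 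The decisive input is then Lemma (\ref{lemma:scal>0}): $\scal(g_K)>0$ gives $U_\rho/\rho=1/(\rho^2\scal(g_K))>0$, hence the one-sided bound $\gamma\le U^{ref}_\rho/\rho\le CR/\rho^2$, which the mean value property on spheres in $\bbR^5$ converts into boundedness; Liouville then makes $\gamma$ constant, and the boundary asymptotics identify the residual. Your proposal contains neither the passage to $\bbR^5$ nor any use of the positivity of the scalar curvature, and without some such device the growth estimate you need is out of reach.
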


\begin{proof}
Regardless of the value of the constants, the function $U^{ref}$ is axi-symmetric harmonic i.e. it satisfies Equation (\ref{Uharmonic}) and 
$$
\lim_{\rho\rightarrow 0}\frac{U^{ref}(z,\rho)}{\log(\rho^2)}=f(z).
$$
The main idea of the proof is very similar to the main idea in \cite{s} and goes back to work of Dominic Wright (see \cite{w}). Consider the function
$$
\gamma=\frac{U_\rho^{ref}-U_\rho}{\rho}.
$$
\begin{itemize}
\item We start by proving that $\gamma$ is axi-symmetric harmonic on $\bbR^5$ i.e. it satisfies 
$$
\gamma_{\rho\rho}+\gamma_{zz}+\frac{3\gamma_\rho}{\rho}=0.
$$
This is a consequence of the fact that both $U$ and $U^{ref}$ satisfy Equation (\ref{Uharmonic}). We have
\begin{IEEEeqnarray*}{rCl}
\frac{\partial }{\partial \rho}\left(\frac{U_\rho}{\rho}\right)&=&\frac{U_{\rho\rho}}{\rho}-\frac{U_\rho}{\rho^2}, \\ \nonumber
\frac{\partial^2 }{\partial \rho^2}\left(\frac{U_\rho}{\rho}\right)&=&\frac{U_{\rho\rho\rho}}{\rho}-\frac{2U_{\rho\rho}}{\rho^2}+\frac{2U_\rho}{\rho^3}, \\ \nonumber
\frac{\partial^2 }{\partial z^2}\left(\frac{U_\rho}{\rho}\right)&=&\frac{U_{\rho zz}}{\rho}.
\end{IEEEeqnarray*}
Hence 
\begin{IEEEeqnarray*}{rCl}
\frac{\partial^2 }{\partial \rho^2}\left(\frac{U_\rho}{\rho}\right)+\frac{\partial^2 }{\partial z^2}\left(\frac{U_\rho}{\rho}\right)&=&\frac{U_{\rho\rho\rho}+U_{\rho zz}}{\rho}-\frac{2U_{\rho\rho}}{\rho^2}+\frac{2U_\rho}{\rho^3}\\
&=&\frac{\frac{\partial }{\partial \rho}\left(U_{\rho\rho}+U_{zz}\right)}{\rho}-\frac{2U_{\rho\rho}}{\rho^2}+\frac{2U_\rho}{\rho^3}\\
&=&-\frac{\frac{\partial  }{\partial \rho}\left(\frac{U_\rho}{\rho} \right)}{\rho}-\frac{2U_{\rho\rho}}{\rho^2}+\frac{2U_\rho}{\rho^3}\\
&=&-\frac{U_{\rho\rho}}{\rho^2}+\frac{U_\rho}{\rho^3}-\frac{2U_{\rho\rho}}{\rho^2}+\frac{2U_\rho}{\rho^3}\\
&=&-\frac{3U_{\rho\rho}}{\rho^2}+\frac{3U_\rho}{\rho^3}\\
&=&-3\frac{\partial }{\partial \rho}\left(\frac{U_\rho}{\rho}\right).\\
\end{IEEEeqnarray*}
Since the above equalities are simply a consequence of the fact $U$ and $U^{ref}$ both satisfy Equation (\ref{Uharmonic}). By linearity, the string of equalities holds when we replace $U$ by $U^{ref}$ and $\gamma$ satisfies the claimed PDE.

{
\item Next we shall show that $\gamma$ extends smoothly over $\partial \bbH.$  We know from the proof of Proposition (\ref{U}) that
$$
U(z,\rho)=S_0(z)+S_1(z)\log \rho +\rho S_2(z,\rho),
$$
where $S_0$ is a function of $z$, smooth in the segments $\cup_{i} ] z_i , z_{i+1} [$, $S_1(z)=2f(z)$, and $S_2$ is smooth in a neighbourhood of these segments in $\bbH.$ Similarly we have
$$
U^{ref}(z,\rho)=S^{ref}_0(z)+S^{ref}_1(z)\log(\rho)+\rho S^{ref}_2(z,\rho),
$$
where again $S^{ref}_1(z)=2f(z)$. Therefore
\begin{align*}
	\gamma(z,\rho) & =  \left(\frac{\partial S^{ref}_2}{\partial \rho} (z,\rho) - \frac{\partial S_2}{\partial \rho} (z,\rho) \right) \\
	& \ \ \ \ + \frac{S^{ref}_2(z,\rho)-S_2(z,\rho)}{\rho} .
\end{align*}
Hence, $\gamma$ will continuously extend to $\partial \mathbb{H}$ if
$$S^{ref}_2(z,\rho)-S_2(z,\rho) = O(\rho).$$ 
It turns out that both $S^{ref}_2(z,\rho)$ and $S_2(z,\rho)$ are $O(\rho)$ as it can be seen from the fact that both $U^{ref}$ and $U$ satisfy Equation (\ref{Uharmonic}). Indeed, using $f''(z)=0$ and our expansion near $\rho=0$, we find that
\begin{IEEEeqnarray*}{rCl}
	0&=&U_{\rho\rho}+U_{zz}+\frac{U_\rho}{\rho}\\
	&=&\frac{S_2(z,\rho)}{\rho} + S_0''(z) + 3 \frac{\partial S_2}{\partial \rho} + 2 \rho \frac{\partial^2 S_2}{\partial \rho^2},
\end{IEEEeqnarray*}
and recall that $S_2$ is smooth in a neighbourhood of the relevant segments in $\bbH.$ It therefore follows that we must have $S_2(z,\rho)=O(\rho)$ for the right hand side to remain bounded.
}
\end{itemize}

Consider the function
$$
\frac{U_\rho}{\rho}=\frac{\rho U_\rho}{\rho^2 }=\frac{1}{\rho^2\scal (g_K) } .
$$
From Lemma (\ref{lemma:scal>0}), $\scal (g_K)>0$ so that 
$$\frac{U\rho}{\rho}>0.$$ 
The rest of the arguments is also inspired on \cite{s}. We have
$$
\gamma\leq \frac{U^{ref}_\rho}{\rho}.
$$
On the other hand, from the explicit formula for $U^{ref}$ in Equation (\ref{explicitUBG}) we can see that 
$$
\frac{U^{ref}_\rho}{\rho}=\frac{2\left(A+{Bz}+\sum a_i \sqrt{(z-z_i)^2+\rho^2}\right)}{\rho^2}.
$$
Hence there is a constant $C$ so that
$$
\frac{U^{ref}_\rho}{\rho}\leq\frac{CR}{\rho^2},
$$
where $R=\sqrt{z^2+\rho^2}.$ As in \cite{s} this implies $\gamma$ is bounded. The argument is roughly as follows. Let $w$ be a point in $\bbR^5$. Because $\gamma$ is harmonic, 
$$
\gamma(w)=\frac{1}{aR^4}\int_{\partial B(w,R)}\gamma(w_0)dw_0,
$$
for a universal constant $a$, so that, using the bound on $\gamma$ we get
$$
\gamma(w)\leq\frac{C'}{R^3}\int_{\partial B(w,R)}\frac{dw_0}{\rho^2},
$$
for a constant $C'.$ As one can see from direct calculations 
$$
\int_{\partial B(0,R)}\frac{dw_0}{\rho^2}\leq C''R^2,
$$
and this shows $\gamma$ is bounded. Since it is smooth on $\bbR^5$ and harmonic, it must be constant, say $\gamma=k$ for $k \in \mathbb{R}$. Hence, from the equation
$$U_\rho^{ref}-U_\rho=k\rho,$$
we conclude that there is a function $K(z)$ such that 
$$
U=U^{ref}+\frac{k\rho^2}{2}-K(z).
$$
{However, from the fact that 
$$
\lim_{\rho\rightarrow 0}\frac{U^{ref}(z,\rho)}{\log(\rho^2)} = f(z) = \lim_{\rho\rightarrow 0}\frac{U(z,\rho)}{\log(\rho^2)} ,
$$
we conclude that $\frac{k\rho^2}{2}-K(z)$ must vanish and therefore $U=U^{ref}$ as we wanted to show.}
\end{proof}

Assume that $X=M\setminus D$ is endowed with two conformally K\"ahler, Ricci-flat, toric metrics $g$ and $\tilde{g}$ {whose adaptated K\"ahler forms are cohomologous. Now let $g_K$ and $\tilde{g}_K$ be extremal K\"ahler metrics conformal to $g$ and $\tilde{g}$ as in Proposition (\ref{U}). Together with the complex structure on $X$ each determines a K\"ahler form $\omega_K$ and $\tilde{\omega}_K$ and $[\omega_K]=[\tilde{\omega}_K].$ 
By Moser's trick there is an equivariant diffeomorphism $\Psi$ of $X$ such that $\Psi^*\omega_K=\tilde{\omega}_K.$  This diffeomorphism does not a priori preserve the complex structure. Because $(X,\omega_K)\simeq(X, \tilde{\omega}_K)$ the moment polytopes of the two manifolds are the same up to a translation and an $SL(2,\bbZ)$ transformation. Both metrics determine} functions $U$ and $\tilde{U}$ as in Proposition (\ref{U}) and the corresponding $f$ and $\tilde{f}$ defined via
$$
f(z)=\lim_{\rho\rightarrow 0}\frac{U(z,\rho)}{\log(\rho^2)}\quad \text{and } \tilde{f}(z)=\lim_{\rho\rightarrow 0}\frac{\tilde{U}(z,\rho)}{\log(\rho^2)}.
$$
{Now $f$ and $\tilde{f}$ are locally affine functions with the same slopes. We must have
$$
f(z)=\sum_{i=1}^{d-1}a_i|z-z_i|+A+Bz \quad  \text{and } \tilde{f}(z)=\sum_{i=1}^{d-1}a_i|z-\tilde{z}_i|+A+Bz.
$$
We will show that $z_i=\tilde{z}_i$ for $i=1,\cdots, d-1$ so that  $f$ and $\tilde{f}$ must coincide. 

\begin{lemma}
Let $X=M\setminus D$ be endowed with two conformally K\"ahler, Ricci-flat, toric metric $g$ and $\tilde{g}$  {whose adapted K\"ahler forms are cohomologous,} determining functions $U$ and $\tilde{U}$ as in Proposition (\ref{U}) and the corresponding $f$ and $\tilde{f}.$ Then $z_i=\tilde{z}_i$ so that $f=\tilde{f}.$
\end{lemma}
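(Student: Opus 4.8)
The plan is to pin down each breakpoint $z_i$ in terms of quantities that depend only on the moment polytope $P$ and the removed edge $E$ — equivalently, only on the cohomology class $[\omega|_X]$, which is common to $g$ and $\tilde g$. Combined with the fact, already established, that $f$ and $\tilde f$ have the same slopes $a_i$, $A$ and $B$, this forces $f=\tilde f$.

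First I would make the dictionary between $\partial\bbH$ and $\partial(P\setminus E)$ precise. Writing $V_i:=E_i\cap E_{i+1}$ for the vertices of $P$ surviving in $P\setminus E$ ($i=1,\dots,d-1$), Lemma (\ref{lemma_bij}) — whose proof, quoted from \cite{s}, in fact yields a homeomorphism — shows that $\bz$ carries the boundary arc $E_1\,V_1\,E_2\cdots V_{d-1}\,E_d$ of $P\setminus E$ onto $\partial\bbH=\bbR$ preserving order; hence $]z_{i-1},z_i[$ is the $\bz$-image of the facet $E_i$, $z_i$ is the image of $V_i$, and $z_0=-\infty$, $z_d=+\infty$. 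Set $c_i:=\scal(g_K)(V_i)=\langle\fut,V_i\rangle$ for $i=1,\dots,d-1$; by Lemma (\ref{lemma:scal>0}) and the analysis in its proof, the (affine) zero locus of $\scal(g_K)$ meets $\overline{P\setminus E}$ at most along $E$, so it avoids every $V_i$ and $c_i>0$. Letting $\rho\to0$ in the identity $\rho U_\rho=\scal(g_K)^{-1}$ of Proposition (\ref{U}) — so $\rho U_\rho\to 2f$, exactly as in that proof — and then $z\to z_i$ — so $\bz^{-1}(z,0)\to V_i$ and $\scal(g_K)\to c_i$ by continuity of the affine function $\scal(g_K)$ — one obtains
$$f(z_i)=\frac{1}{2\,\scal(g_K)(V_i)}=\frac{1}{2c_i},\qquad i=1,\dots,d-1.$$
Moreover $f$ restricted to $]z_{i-1},z_i[$ is affine of slope $\det(\fut,\nu_i)$, again by Proposition (\ref{U}).

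Next I would compute the ``period'' $z_i-z_{i-1}$ across a compact facet $E_i$ ($2\le i\le d-1$): let $L_i$ be its lattice length and $y\in[0,L_i]$ the lattice-length parameter running from $V_{i-1}$ to $V_i$. Repeating the boundary expansion from the proof of Proposition (\ref{U}) — this is Donaldson's bookkeeping from \cite{donjoyce} adapted to the present $\rho$ — near the interior of $E_i$ one has $g_\bbR=l_i^{-1}dl_i^{\,2}+m_i(y)\,dy^2+(\text{lower order})$ with $m_i>0$ the boundary density of $u$ along $E_i$, while $\rho=\dfrac{(\det\Hess u)^{-1/2}}{\scal^2(g_K)}=\sqrt{l_i}\,\bigl(\sqrt{m_i(y)}\,\scal(g_K)|_{E_i}(y)\bigr)^{-1}\bigl(1+O(l_i)\bigr)$; taking the Hodge star of $d\rho$ for $g_\bbR$ and restricting to $E_i$ gives $dz|_{E_i}=\dfrac{dy}{2\,\scal(g_K)|_{E_i}(y)^2}$, up to an orientation sign fixed by demanding $z_i>z_{i-1}$. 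Since $\scal(g_K)|_{E_i}$ is the affine function of $y$ taking the values $c_{i-1}$ at $y=0$ and $c_i$ at $y=L_i$, integrating yields
$$z_i-z_{i-1}=\int_{E_i}dz=\frac{L_i}{2\,c_{i-1}\,c_i}.$$
(When $\det(\fut,\nu_i)\ne0$ this is already forced by $f(z_{i-1})=\tfrac{1}{2c_{i-1}}$, $f(z_i)=\tfrac{1}{2c_i}$ and the slope $\det(\fut,\nu_i)=(c_{i-1}-c_i)/L_i$ of $f$ on $]z_{i-1},z_i[$; the integral formula is what one needs when $\fut\perp E_i$, in which case $f$ is constant there.)

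Finally I would assemble the conclusion. Because the adapted K\"ahler forms of $g$ and $\tilde g$ are cohomologous, Moser's trick (torus-averaged) identifies their moment polytopes up to $SL(2,\bbZ)$ and translation; with the normalisation $\scal(g_K)=\alpha_1x_1+\alpha_2x_2$ in force this makes $L_i$, $\nu_i$, $\fut$, and hence all $c_i$, literally the same for $g$ and $\tilde g$. Consequently $z_i-z_{i-1}=\tilde z_i-\tilde z_{i-1}$ for $i=2,\dots,d-1$; and since the harmonic conjugate $z$ of Proposition (\ref{U}) is defined only up to an additive constant, we may normalise $z_1=\tilde z_1$, whence $z_i=\tilde z_i$ for every $i$. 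Then $f$ and $\tilde f$ are continuous, piecewise affine with the same breakpoints and the same slopes, and take the same value $\tfrac{1}{2c_1}$ at $z_1$, so $f=\tilde f$ (in particular $A=\tilde A$). I expect the period identity to be the only real obstacle: it requires controlling the harmonic conjugate $z$ along a facet, i.e.\ the sharp boundary asymptotics of $g_\bbR$ and of $\rho$ there, which is precisely the technical input already developed in \cite{s} and, for Joyce's ansatz, in \cite{donjoyce}; everything else is a limiting argument and linear algebra on the polytope.
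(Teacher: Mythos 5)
Your proposal is correct, but it reaches the conclusion by a genuinely different computation than the paper. The paper's proof pins down the $z_i$ by computing the symplectic volumes $\vol(S_i)=\int_{S_i}\omega$ of the invariant spheres over the compact edges in two ways: once directly via $d\scal(g_K)=\det(\fut,\nu_i)\,dx$, and once via Biquard--Gauduchon's moment coordinate $\mu=-2\bigl(z+\frac{\rho H_\rho-2H}{H_z}\bigr)$ built from the harmonic conjugate $H$ of $U$; this produces the two quantities $\frac{1}{f(z_i)}-\frac{1}{f(z_{i-1})}$ and $\frac{z_i}{f(z_i)}-\frac{z_{i-1}}{f(z_{i-1})}$ as cohomological data, determining $z_{i-1},z_i$ for compact edges not perpendicular to $\fut$, with the (at most one) perpendicular edge handled via its neighbours. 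You instead use the boundary identity $f(z_i)=\tfrac{1}{2}\scal(g_K)(V_i)^{-1}$ together with the known slopes $\det(\fut,\nu_i)$ to get the periods $z_i-z_{i-1}$ for free on non-perpendicular edges, and supply the perpendicular case by the arc-length identity $dz|_{E_i}=\tfrac{dy}{2\scal(g_K)^2}$, giving the uniform formula $z_i-z_{i-1}=L_i/(2c_{i-1}c_i)$; you then anchor everything by the translation freedom in the harmonic conjugate. What your route buys is a cleaner bookkeeping: you avoid the $\mu$-coordinate and its undetermined constants $C, C_i, K_i$ entirely, you treat the perpendicular edge on the same footing as the others, and you explicitly recover $A=\tilde A$ from $f(z_1)=\tilde f(\tilde z_1)=\tfrac{1}{2c_1}$, a point the paper glosses over when it asserts the two $f$'s share the same constant term. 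Two small caveats: your intermediate asymptotic should read $\rho=\sqrt{l_i}\,\bigl(\sqrt{m_i(y)}\,\scal(g_K)^2\bigr)^{-1}(1+O(l_i))$ (the scalar curvature enters squared, since $\rho=(\det\Hess u)^{-1/2}\scal(g_K)^{-2}$), though your final formula $dz|_{E_i}=\tfrac{dy}{2\scal^2}$ is the correct one and is independently confirmed by differentiating $f=\tfrac{1}{2}\scal^{-1}$ along a non-perpendicular edge; and the period identity on the perpendicular edge does require the Hodge-star computation of $*d\rho$ up to the boundary, which, as you say, is exactly the boundary analysis of \cite{s} and is not re-proved here.
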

\begin{proof}

{Consider $E_i$, the $i$th edge on $P\setminus E.$ Its pre-image $S_i$ via the moment map in $X$ is a $\mathbb{CP}^1$ whose volume is given by
$$
\text{Vol}(S_i)=\int_{S_i}\omega=\int_{S_i}dx_1\wedge d\theta_1+dx_2\wedge d\theta_2.
$$
On $E_i,$ the direction in $\bbT^2$ which is not collapsed is perpendicular to $\nu_i=(\nu_i^1,\nu_i^2)$ so that we can write $(\theta_1,\theta_2)=t(\nu_i^2,-\nu_i^1)$ for $t\in ]0,2\pi[$ and 
$$
\text{Vol}(S_i)=\int_{S_i}\omega=2\pi\int_{E_i}\left(\nu_i^2dx_1-\nu_i^1dx_2\right).
$$
Now we want to express the integral above in $(z,\rho)$ coordinates, in which
$$
E_i=\{(z,\rho): \rho=0,\, z\in [z_{i-1},z_i]\}.
$$ 
Assume that $\scal(g_K)$ is not constant along $E_i$ which happens iff $\det(\fut,\nu_i)\ne 0.$ Then, the easiest way to express $(x_1,x_2)$ via $(z,\rho)$ is through the scalar curvature of $g_K.$ 
On $E_i,$ $\nu_i^1dx_1+\nu_i^2dx_2=0$ so that $$(dx_1,dx_2)=(\nu_i^2,-\nu_i^1)dx,$$ for a coordinate $x$ and
$$
\nu_i^2dx_1-\nu_i^1dx_2=|\nu_i|^2dx, \quad d\scal(g_K)=\det(\fut,\nu_i)dx.
$$
If $\det(\fut,\nu_i)\ne 0$ 
$$
\text{Vol}(S_i)=2\pi\frac{|\nu_i|^2}{\det(\fut,\nu_i)}\int_{E_i}d\scal(g_K).
$$
Near $E_i,$ 
$$
\rho U_{\rho}=\frac{1}{\scal(g_K)}=2f(z)+\rho S_2(z,\rho),
$$
so that 
\begin{align*}
\text{Vol}(S_i) & = 2\pi\frac{|\nu_i|^2}{\det(\fut,\nu_i)}\left(\scal(g_K)(z_i,0)-\scal(g_K)(z_{i-1},0)\right), \\
& = \frac{\pi|\nu_i|^2}{\det(\fut,\nu_i)}\left(\frac{1}{f(z_i)}-\frac{1}{f(z_{i-1})}\right).
\end{align*}
We consider a different moment coordinate, namely $\mu$, to follows the notation of \cite{bg} (Proposition 6.1 in \cite{bg}) given by
$$
\mu=-2\left(z+\frac{\rho H_\rho-2H}{H_z}\right)=2\left(\frac{\rho^2 U_z+2H}{\rho U_\rho}-z\right),
$$
where $H$ is the harmonic conjugate of $U$ that is $$H_z=\rho U_\rho, \quad H_\rho=-\rho U_z.$$
Near $E_i,$ the asymptotic behaviour of $U$ yields
$$
\begin{aligned}
&H_\rho=-\rho U_z=-2\det(\fut,\nu_i) \rho\log(\rho)+O(\rho\log(\rho)),\\ 
&H_z=\rho U_\rho=2\det(\fut,\nu_i)z+{2}C_i+O\left(1\right),\\
&H=\det(\fut,\nu_i)z^2+2C_iz+K_i+O(\rho),\\
&\mu=2\left(\frac{2z(\det(\fut,\nu_i)z+C_i)+2K_i}{2\det(\fut,\nu_i)z+C_i}-z\right)+O\left(\rho\right),
\end{aligned}
$$
{where $C_i$ are as in the proof of Lemma (\ref{nu1+nud}) and $K_i$ are constants}. This gives
$$
\mu=\frac{C_iz+2K_i}{f(z)},
$$
As before $(dx_1,dx_2)=(\nu_i^2,-\nu_i^1)dx,$ and we can assume $dx=Cd\mu,$ so
$$
\begin{aligned}
\text{Vol}(S_i)=&2C\pi{|\nu_i|^2}\int_{E_i}d\mu\\
&=2C\pi{|\nu_i|^2}\left(2K_i\left(\frac{1}{f(z_i)}-\frac{1}{f(z_{i-1})}\right)+C_i\left(\frac{z_i}{f(z_i)}-\frac{z_{i-1}}{f(z_{i-1})}\right)\right).
\end{aligned}
$$
When $\fut$ is not perpendicular to $E_i,$ we see that $\text{Vol}(S_i)$ determines
$$
\left(\frac{z_i}{f(z_i)}-\frac{z_{i-1}}{f(z_{i-1})}\right) \, \text{and} \left(\frac{1}{f(z_i)}-\frac{1}{f(z_{i-1})}\right).
$$
When $E_i$ is a compact edge in $P\setminus E,$ that is $1<i<d-2,$ then $\text{Vol}(S_i)$ is $2\pi$ times the euclidean length of $E_i$ so that this length determines $z_{i-1}$ and $z_i$ for $1<i<{d-1}$ when $E_i$ is not perpendicular to $\fut$. But there is at most one compact edge perpendicular to $\fut$ and in this case we can rely on adjacent edges to determine the missing $z_i.$}
\end{proof}

This Lemma finishes the proof of Theorem (\ref{thm_main}) as it implies that $U=U^{ref}$ and $\tilde{U}=U^{ref}$ so $U$ and $\tilde{U}$ also differ by a constant and the metrics are isometric.

\subsection{Proof of Theorem (\ref{thm_main2})}

{By assuming that the extremal vector field is not normal to either one of the non-compact edges in the moment polytope of $X$ we may conclude that $\det(\fut,\nu_1)\ne 0$ and $\det(\fut,\nu_{d-1})\ne 0.$ The conformally K\"ahler, Ricci-flat, toric metric determines a K\"ahler metric $g_K,$ a function $U$ as in Proposition (\ref{U}) and a function
$$
f(z)= \lim_{\rho\rightarrow 0}\frac{U(z,\rho)}{\log(\rho^2)}.
$$
From Lemma (\ref{nu1+nud}){ and Corollary (\ref{cor:B=0})}, we know that there are numbers $a_1,\cdots, a_d,$ a constant $A$ and $z_1,\cdots, z_{d-1},$ points on $\partial \bbH$ such that
$$
f(z)=A+\sum_{i=1}^{d-1}a_i|z-z_i|,
$$
where $2a_i=\det(\fut,\nu_{i+1}-\nu_{i})$ for $i=1,\cdots, d-1$. We have shown in Lemma (\ref{lemma:scal>0}) that $\scal(g_K)>0$ and this implies that $U_\rho>0$ since $\scal{g_K}=\frac{1}{\rho U_\rho}.$ We also know that the quantity
$$
V=-\left(\rho U_\rho+\frac{U^2_\rho U_{zz}}{U^2_{\rho z}+U^2_{zz}}\right),
$$
is positive so that $U_{zz}<0.$ A maximum principle for $U$ then implies that the function $$z\mapsto U(z,\rho)$$ is concave and $f$ is convex. These arguments appear in the proof of Lemma 4.2 in \cite{bg}. The convexity of $f$ implies the inequality $a_1,\cdots, a_d>0.$
As above set $U^{ref}$ to be 
\begin{equation}
\begin{aligned}
U^{ref}=&2\sum_{i=1}^{d-1} a_i\left(\sqrt{\rho^2+(z-z_i)^2}-(z-z_i)\log\frac{\sqrt{\rho^2+(z-z_i)^2}+z-z_i}{\rho}\right)\\
&+A\log(\rho^2).
\end{aligned}
\end{equation}
As in the proof of Theorem (\ref{thm_main}) above we can show that $U=U^{ref}.$ Now
$$
V=-\left(\rho U_\rho^{ref}+\frac{(U^{ref}_\rho)^2U^{ref}_{zz}}{(U^{ref}_{\rho\rho})^2+(U^{ref}_{zz})^2}\right),
$$
 is given by $1+\frac{2A}{R}$ as one can see by direct inspection. On the other hand, because $U=U^{ref},$ this must be positive which can only happen if $A>0.$ We are in the setting of Theorem (\ref{thmBG2}) and $U^{ref}=U^{BG}$ so that our metric coincides with a Biquard-Gauduchon metric. This finishes the proof of Theorem (\ref{thm_main2}).


\begin{thebibliography}{99}
  \bibitem[A]{a1}
  {\scshape M. Abreu}
   \emph {K\"ahler geometry of toric manifolds in symplectic coordinates}, Symplectic and contact topology: interactions and perspectives , Fields Inst. Commun., {\bf 35} (2003), Amer. Math. Soc., Providence, RI, 1--24.
   
  
  \bibitem[AS]{as}
    {\scshape M. Abreu, R. Sena-Dias}
    \emph{Scalar-flat K\"ahler metrics on non-compact symplectic toric 4-manifolds}, Ann. Global Anal. Geom. {\bf 41} (2012), no. 2, 209--239.
    
\bibitem[AG]{AG}
 	{\scshape A. Vestislav, P. Gauduchon}
	\emph{The Riemannian Goldberg–Sachs theorem}, International Journal of Mathematics {\bf 8} (1997), no. 4, 421-439.

  \bibitem[BG]{bg} 
   {\scshape O. Biquard, P. Gauduchon}
   \emph{ On Toric Hermitian ALF Gravitational Instantons},  Comm. Math. Phys. {\bf 399} (2023), {no. 1}, 389--422.
   
   \bibitem[CC]{cc}
{\scshape G. Chen, X. Chen}
\emph{Gravitational instantons with faster than quadratic curvature decay. I}, Acta Math. {\bf 227} (2021), (2), 263--307.


   \bibitem[CDS]{cds}
      {\scshape X. Chen, S. Donaldson, S. Sun}
       \emph{K\"ahler-Einstein metrics on Fano manifolds. I: Approximation of metrics with cone singularities}, J. Amer. Math. Soc. {\bf 28} (2015), {no. 1}, 183--197.
  
  \bibitem[CLW]{clw}
          {\scshape X. Chen, C. Lebrun, B. Weber}
  \emph{On conformally K\"ahler, Einstein manifolds}, J. Amer. Math. Soc. {\bf 21} (2008), {no. 4}, 137--1168.    

 \bibitem[D]{d}
{\scshape A. Derdzinski}
   \emph{Self-Dual K\"ahler Manifolds and Einstein Manifolds of Dimension Four}, Comp. Math. {\bf 49} (1983) 405--433. 

\bibitem[Do]{donjoyce}
{\scshape S. Donaldson}
\emph{A generalised Joyce construction for a family of nonlinear partial differential equations}, J. G\"okova Geom. Topol. GGT {\bf 3} (2009), 1--8.
   
   \bibitem[Do2]{do2}
{\scshape S. Donaldson}
   \emph{Extremal metrics on toric surfaces: a continuity method}, J. Differential Geom. {\bf 79} (2008), no. 3, 389--432.
   
\bibitem[L0]{l0} 
 {\scshape C. Lebrun} 
 \emph{Einstein metrics on complex surfaces}, Geometry and physics (Aarhus, 1995), Lecture Notes in Pure and Appl. Math {\bf 184}, 167--176.

\bibitem[L1]{l1}
{\scshape C. Lebrun} 
\emph{Bach-Flat K\"ahler Surfaces}, J. Geom. Analysis {\bf 30} (2020) 2491--2514.
      

%
      \bibitem[Li]{m}
{\scshape M. Li}
      \emph{Classification results for Hermitian non-K\"ahler gravitational instantons},  arXiv:2304.01609.

 \bibitem[Li2]{m2}
{\scshape M. Li}
      \emph{On 4-dimensional Ricci-flat ALE manifolds},  arXiv:2310.13197.

   \bibitem[OSD]{OSD}
{\scshape G. Oliveira, R. Sena-Dias}
\emph{ Minimal Lagrangian tori and action-angle coordinates}, Transactions of the American Mathematical Society, {\bf 374} (2021), 11, 7715--7742.


    \bibitem[S]{s}
    {\scshape R. Sena-Dias}
    \emph{ Uniqueness among scalar-flat K\"ahler metrics on non-compact toric 4-manifolds}, J. Lond. Math. Soc. {\bf 103} (2021), no. 2, 372--397.
   
   
   \bibitem[W]{w} 
   {\scshape  D.~Wright}
   \emph{The geometry of anti-self-dual orbifolds}, Ph.D thesis, Imperial College, London, 2009.

\end{thebibliography}
\end{document}